\documentclass[12pt,a4paper]{article}

\usepackage{amsmath,amsfonts,amssymb}
\usepackage{bbm}
\usepackage{cases}

\setcounter{page}{1}
\usepackage{lipsum}
\usepackage{subcaption}

\usepackage{xcolor}
\usepackage{graphicx}
\usepackage{ulem}

\frenchspacing
\textwidth              16 cm
\textheight             22.0 cm
\topmargin              -1.1  cm
\evensidemargin         0.0 cm
\oddsidemargin          -0.0 cm


\newtheorem{defn}{Definition}[section]
\newtheorem{theorem}[defn]{Theorem}

\newtheorem{proposition}[defn]{Proposition}
\newtheorem{corollary}[defn]{Corollary}
\newtheorem{remark}[defn]{Remark}



\newenvironment{proof}{{\bf Proof }}{{\vskip 0.1cm \hfill$\Box$}}

\begin{document} 

\noindent
{\Large \bf Remarks on well-posedness for linear elliptic equations via divergence-free transformation}
\\ \\
\bigskip
\noindent
{\bf Haesung Lee}  \\
\noindent
{\bf Abstract.}  
This paper investigates the well-posedness of linear elliptic equations, focusing on the divergence-free transformation introduced in the author's recent work [J. Math. Anal. Appl. 548 (2025), 129425]. By comparing this approach with classical bilinear form methods, we demonstrate that while standard techniques encounter limitations in handling zero-order coefficients $c \in L^1(U)$, the divergence-free transformation successfully establishes well-posedness in this setting. Furthermore, utilizing the Riesz-Thorin interpolation theorem between the cases $c \in L^1(U)$ and $c \in L^{\frac{2d}{d+2}}(U)$, we establish the existence and uniqueness of weak solutions under the assumption $c \in L^s(U)$ for $s \in [1, \frac{2d}{d+2}]$.
\\ \\
\noindent
{Mathematics Subject Classification (2020): {Primary 35J15, 35J25; Secondary 35J75, 35B45, 46B70}}\\

\noindent 
{Keywords: existence, uniqueness, regularity, elliptic equations, divergence-free transformation, interpolation
}
\section{Introduction}
This paper provides an overview of recent results concerning the well-posedness of linear elliptic equations, focusing on methods that utilize a divergence-free transformation developed in \cite{L25jm}. Linear elliptic partial differential equations, in both divergence and non-divergence forms, are foundational tools for modeling phenomena across physics, financial engineering, and pure mathematics. A central theme in their study has been the existence, uniqueness, and regularity of solutions to the following Dirichlet boundary value problem:
\begin{equation} \label{maineq2}
\left\{
\begin{alignedat}{2}
-{\rm div}(A \nabla u) + \langle \mathbf{H}, \nabla u \rangle + c u &=f  && \quad \; \mbox{\rm in $U$,}\\
u &= 0 &&\quad \; \mbox{\rm on $\partial U$},
\end{alignedat} \right.
\end{equation}
where $U \subset \mathbb{R}^d$ is a bounded open set with $d \geq 3$. The matrix $A$ is assumed to be uniformly strictly elliptic (i.e. \eqref{elliptici} holds), and the precise regularity conditions on the coefficients and data will be discussed below. Here, $u$ is called a weak solution to \eqref{maineq2} if $u \in H^{1,2}_0(U)$ and satisfies
\begin{equation} \label{weakformu}
\int_U \left( \langle A \nabla u, \nabla \psi \rangle + \langle \mathbf{H}, \nabla u \rangle \psi + cu \psi \right) \, dx = \int_U f \psi \, dx \quad \text{ for all } \psi \in C_0^{\infty}(U),
\end{equation}
where the integrals are well defined (for instance, $\mathbf{H} \in L^2(U, \mathbb{R}^d)$, $c \in L^1(U)$ with $cu \in L^1(U)$, and $f \in L^1(U)$). Important tools for studying the well-posedness of \eqref{maineq2} are the Lax-Milgram theorem, the Fredholm alternative, and the weak maximum principle. These methods were systematically employed in \cite{S65} to establish existence and uniqueness of weak solutions, and the resulting framework was later generalized in \cite{T73} (see also \cite{Tma}). However, the classical approaches of \cite{S65} and \cite{T73} typically require assumptions such as
$\mathbf{H} \in L^d(U, \mathbb{R}^d)$, $c \in L^{\frac{d}{2}}(U)$,
in order to guarantee that the associated bilinear form is bounded on $H^{1,2}_0(U) \times H^{1,2}_0(U)$.
In particular, the integrability of the zero-order coefficient $c$ seems difficult to generalize in any natural way beyond $L^{\frac{d}{2}}(U)$ (cf. \cite{Dr} and \cite{GGM13}), and this appears to be the case for $W^{1,p}$-weak solutions as well (see \cite[Corollary 2.1.6]{BKRS15} and \cite{KK19, KPT23}). Furthermore, in \cite{V92, K21}, even for establishing the existence of strong solutions to the non-divergence form counterpart of \eqref{maineq2}, the condition $c \in L^{\frac{d}{2}}(U)$ is required for the zero-order coefficient.
However, by employing a new method based on the so-called divergence-free transformation, developed in \cite{L25jm}, the unique weak solution to \eqref{maineq2} can be transformed into the unique weak solution to \eqref{maineqdivfree}. Consequently, we can also handle the case where the zero-order coefficient $c$ belongs merely to $L^1(U)$, which substantially relaxes the previous integrability assumption $c \in L^{\frac{d}{2}}(U)$.
\\
In this paper, we discuss the existence, uniqueness, energy estimates, and boundedness estimates for solutions to \eqref{maineq2} obtained in \cite{L25jm}, and additionally investigate the trade-off between the integrability of the data $f$ and that of the coefficient $c$ via the Riesz-Thorin interpolation theorem. Ultimately, we establish the well-posedness of \eqref{maineq2} even when the zero-order coefficient $c$ satisfies only $c \in L^s(U)$ with $s \in [1, \frac{2d}{d+2}]$. Here the corresponding data $f$ is assumed to satisfy an intermediate integrability condition between $L^{\frac{\hat{p}d}{d+\hat{p}}}(U)$ and $L^{\frac{2d}{d+2}}(U)$ for some $\hat{p} \in (d, \infty)$.
\\
In particular, this paper should be an overview of \cite{L25jm} rather than a work providing complete proofs. We focus on stating the main results and explaining the underlying ideas. The paper is organized as follows. In Section \ref{secnot}, we introduce notations and conventions. In Section \ref{sec3} we discuss existence and uniqueness of solutions via the classical bilinear form method. Section \ref{sec4} introduces divergence-free transformation methods, and Section \ref{sec5} is devoted to deriving the interpolation results.

\section{Notations and conventions} \label{secnot}
In this paper, we work on the Euclidean space $\mathbb{R}^d$ equipped with the standard inner product $\langle \cdot, \cdot \rangle$ and norm $\|\cdot\|$.
For $a,b \in \mathbb{R}$ we set $a \wedge b := \min\{a,b\}$ and $a \vee b := \max\{a,b\}$. 
For a Lebesgue measurable set $E \subset \mathbb{R}^d$ we write $|E| := dx(E)$, where $dx$ is the Lebesgue measure on $\mathbb{R}^d$.
Let $U \subset \mathbb{R}^d$ be open. We denote by $\mathcal{B}(U)$ the set of Borel measurable functions on $U$. If $\mathcal{A} \subset \mathcal{B}(U)$, then $\mathcal{A}_0$ is the set of all $f \in \mathcal{A}$ whose support is compactly contained in $U$. The spaces of continuous functions on $U$ and on $\overline{U}$ are denoted by $C(U)$ and $C(\overline{U})$, and we put $C_0(U) := C(U)_0$. For $k \in \mathbb{N} \cup \{\infty\}$ we denote by $C^k(U)$ the space of $k$-times continuously differentiable functions on $U$ and set $C_0^k(U) := C^k(U) \cap C_0(U)$. 
For $r \in [1,\infty]$ we denote by $L^r(U)$ the usual Lebesgue space on $U$ with respect to $dx$, equipped with the norm $\|\cdot\|_{L^r(U)}$. The space $L^r(U,\mathbb{R}^d)$ consists of all $\mathbb{R}^d$-valued $L^r$-functions on $U$, with norm $\|\mathbf{F}\|_{L^r(U)} := \big\|\,\|\mathbf{F}\|\,\big\|_{L^r(U)}$. 
For $i = 1,\dots,d$ and a function $f$ on $U$ we denote by $\partial_i f$ the $i$-th weak partial derivative of $f$, whenever it exists. The Sobolev space $H^{1,r}(U)$ consists of all $f \in L^r(U)$ such that $\partial_i f \in L^r(U)$ for all $i = 1,\dots,d$, endowed with the usual $H^{1,r}(U)$-norm. For $q \in [1,\infty)$ we write $H_0^{1,q}(U)$ for the closure of $C_0^\infty(U)$ in $H^{1,q}(U)$. 
In the case where $U$ is a bounded open subset of $\mathbb{R}^d$, by the Poincaré inequality, for each $u \in H^{1,2}_0(U)$ we write
$ \|u\|_{H^{1,2}_0(U)} := \|\nabla u\|_{L^2(U)}$.
We denote the dual space of $H^{1,2}_0(U)$ by $H^{-1,2}(U)$. 
The space $H^{2,r}(U)$ is defined as the set of all $f \in L^r(U)$ such that $\partial_i f \in L^r(U)$ and $\partial_i \partial_j f \in L^r(U)$ for all $i,j = 1,\dots,d$, equipped with the usual $H^{2,r}(U)$-norm. The weak Laplacian is given by $\Delta f := \sum_{i=1}^d \partial_i \partial_i f$.

\section{Bilinear form methods} \label{sec3}
Let us consider the following condition: \\ \\
{\bf (Y1)}:
\textit{
$U$ is a bounded open subset of $\mathbb{R}^d$ with $d \geq 3$, $\mathbf{H} \in L^d(U, \mathbb{R}^d)$, $c \in L^{\frac{d}{2}}(U)$ with $c \geq 0$ in $U$. $A = (a_{ij})_{1 \leq i,j \leq d}$ is a (possibly non-symmetric) matrix of measurable functions on $\mathbb{R}^d$ such that $A$ is uniformly strictly elliptic and bounded, i.e.
there exist constants $M > 0$ and $\lambda > 0$ satisfying
\begin{equation} \label{elliptici}
\langle A(x) \xi, \xi \rangle \geq \lambda \| \xi \|^2, \;\; \max_{1 \leq i,j \leq d} |a_{ij}(x)| \leq M, 
\quad \text{for a.e. } x \in \mathbb{R}^d \text{ and all } \xi \in \mathbb{R}^d.
\end{equation}
}
Under the assumption \textbf{(Y1)}, we define the bilinear form $(\mathcal{B}, H^{1,2}_0(U))$
by
\begin{equation} \label{bilinearf}
\mathcal{B}(f,g):= \int_{U} \langle A \nabla f, \nabla g \rangle \,dx + \int_{U} \langle \mathbf{H}, \nabla f \rangle g\,dx + \int_{U} c f g\,dx, \quad f,g \in H^{1,2}_0(U).
\end{equation}
Then, it is well-defined and, by the H\"older inequality and the Sobolev inequality \cite[Section 5.6, Theorem 1]{E10}, it satisfies
\begin{equation} \label{boundedness}
\left|  \mathcal{B} (f,g) \right| \leq K \| \nabla f\|_{L^2(U)} \| \nabla g\|_{L^2(U)},
\end{equation}
where
\[
K := dM + \frac{2(d-1)}{d-2} \|\mathbf{H} \|_{L^d(U)} + \frac{4(d-1)^2}{(d-2)^2} \|c\|_{L^{\frac{d}{2}}(U)}.
\]
Since $\mathbf{H}\in L^{d}(U, \mathbb{R}^{d})$, we have
\[
\lim_{n \to\infty}\int_{U} 1_{\{\|\mathbf{H}\|\ge n \}}\|\mathbf{H}\|^{d}\,dx=0.
\]
Hence, we can choose a constant $N\ge0$ such that
\begin{equation} \label{vectorfieh}
\left( \int_U 1_{\{ \|\mathbf{H}\| \geq N \}} \| \mathbf{H} \|^d \, dx \right)^{\frac{2}{d}} 
\leq \frac{\lambda^2}{16} \left( \frac{d-2}{d-1} \right)^2.
\end{equation}
Then, we obtain

\begin{equation} \label{coercive}
\begin{aligned}[b]
&\left| \int_{U} \langle \mathbf{H}, \nabla f \rangle f \, dx \right|
\leq \int_U \|\mathbf{H}\|\, |f|\, \|\nabla f\| \, dx \\
&\quad \leq \frac{\lambda}{4} \int_U \|\nabla f\|^2 dx + \frac{1}{\lambda} \int_U \|\mathbf{H}\|^2 |f|^2 dx \\
& \quad =  \frac{\lambda}{4} \int_U \|\nabla f\|^2 dx +\frac{1}{\lambda}\int_U 1_{\{ \|\mathbf{H}\| \geq N \}} \|\mathbf{H}\|^2 |f|^2 dx \\
&\qquad +\frac{1}{\lambda} \int_U 1_{\{ \|\mathbf{H}\| < N \}} \|\mathbf{H}\|^2 |f|^2 dx \\
&\quad \leq \frac{\lambda}{4} \int_U \|\nabla f\|^2 dx+ \frac{1}{\lambda}\left( \int_U 1_{\{ \|\mathbf{H}\| \geq N \}} \|\mathbf{H}\|^d dx \right)^{\frac{2}{d}} \|f\|_{L^{\frac{2d}{d-2}}(U)}^2 \\
&\qquad + \frac{N^2}{\lambda} \int_U |f|^2 dx \\
&\quad \leq \frac{\lambda}{2} \|\nabla f\|_{L^2(U)}^2 + \frac{N^2}{\lambda} \int_U |f|^2 dx.
\end{aligned}
\end{equation}
Then,
\begin{equation} \label{desiredestim2}
\mathcal{B}(f,f) +\frac{N^2}{\lambda} \| f\|^2_{L^2(U)}  \geq \frac{\lambda}{2}\|\nabla f \|^2_{L^2(U)} \quad \text{ for all $f \in H^{1,2}_0(U)$}.
\end{equation}
Let $\gamma:= \frac{N^2}{\lambda}$. Since \eqref{boundedness} and \eqref{coercive} hold, it follows from the Lax-Milgram theorem that for each $\psi \in H^{-1,2}(U)$ there exists a unique $u_{\gamma, \psi} \in H^{1,2}_0(U)$ satisfying
$$
\mathcal{B}(u_{\gamma, \psi}, \varphi) + \frac{N^2}{\lambda} \int_{U} u_{\gamma, \psi} \varphi \,dx = \langle \psi, \varphi \rangle_{H^{-1,2}(U)} \quad \text{ for all $\varphi \in H^{1,2}_0(U)$}.
$$
As a direct consequence of the above, we have the following existence and uniqueness result.
\begin{proposition}
Assume {\bf (Y1)} and let $N \geq 0$ be a constant satisfying \eqref{vectorfieh}. Let $f \in L^{\frac{2d}{d+2}}(U)$.
Then, there exists a unique weak solution $u \in H^{1,2}_0(U)$ to
\begin{equation} \label{maineq3}
\left\{
\begin{alignedat}{2}
-{\rm div}(A \nabla u) + \langle \mathbf{H}, \nabla u \rangle + \left(c +\frac{N^2}{\lambda}\right) u &=f  && \quad \; \mbox{\rm in $U$,}\\
u &= 0 &&\quad \; \mbox{\rm on $\partial U$}.
\end{alignedat} \right.
\end{equation}
\end{proposition}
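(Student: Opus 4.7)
The plan is to apply the Lax--Milgram theorem to the shifted bilinear form
\[
\mathcal{B}_\gamma(u,v) := \mathcal{B}(u,v) + \gamma \int_U uv\,dx, \qquad \gamma = \frac{N^{2}}{\lambda},
\]
on the Hilbert space $H^{1,2}_0(U)$ (equipped with the $L^2$-norm of the gradient, by the Poincar\'e inequality). The essential work---both boundedness and coercivity for this form---has already been carried out in the paragraphs preceding the proposition, so the proof really consists of assembling the ingredients.

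First, I would note that $\mathcal{B}_\gamma$ is bounded on $H^{1,2}_0(U) \times H^{1,2}_0(U)$: the estimate \eqref{boundedness} controls $\mathcal{B}$, and the additional term $\gamma \int_U uv\,dx$ is bounded via Cauchy--Schwarz and the Poincar\'e inequality by a constant multiple of $\|\nabla u\|_{L^2(U)}\|\nabla v\|_{L^2(U)}$. The coercivity
\[
\mathcal{B}_\gamma(u,u) \geq \frac{\lambda}{2}\|\nabla u\|_{L^2(U)}^{2}, \qquad u \in H^{1,2}_0(U),
\]
is precisely \eqref{desiredestim2}. Next, I would verify that $f \in L^{\frac{2d}{d+2}}(U)$ defines an element of $H^{-1,2}(U)$ via $\varphi \mapsto \int_U f\varphi\,dx$: this is the dual pairing associated with the Sobolev embedding $H^{1,2}_0(U) \hookrightarrow L^{\frac{2d}{d-2}}(U)$, which gives, by H\"older,
\[
\Bigl| \int_U f\varphi\,dx \Bigr| \leq \|f\|_{L^{\frac{2d}{d+2}}(U)}\, \|\varphi\|_{L^{\frac{2d}{d-2}}(U)} \leq C\,\|f\|_{L^{\frac{2d}{d+2}}(U)}\,\|\nabla\varphi\|_{L^2(U)}.
\]

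With these two facts in hand, the Lax--Milgram theorem yields a unique $u \in H^{1,2}_0(U)$ such that
\[
\mathcal{B}_\gamma(u, \varphi) = \int_U f \varphi\,dx \qquad \text{for all } \varphi \in H^{1,2}_0(U).
\]
Restricting to $\varphi \in C_0^\infty(U)$ and unpacking the definition of $\mathcal{B}_\gamma$ gives exactly the weak formulation of \eqref{maineq3}, so $u$ is the desired weak solution; uniqueness follows from the uniqueness part of Lax--Milgram, since any two weak solutions in $H^{1,2}_0(U)$ yield the same bilinear identity after extending from $C_0^\infty(U)$ to $H^{1,2}_0(U)$ by density.

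There is no real obstacle here---the nontrivial analytic content sits in the coercivity estimate \eqref{coercive}, where the Sobolev inequality is used to absorb the large-values part of $\|\mathbf{H}\|$ into the gradient term, and in the choice of $N$ via \eqref{vectorfieh}; both of these are already done in the text above the proposition. The one small point worth being explicit about is the identification of $L^{\frac{2d}{d+2}}(U)$ as a subspace of $H^{-1,2}(U)$, since this is what makes the hypothesis on $f$ exactly tight for the Lax--Milgram framework.
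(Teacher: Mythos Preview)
Your proposal is correct and follows exactly the same route as the paper: the proposition is stated there as a direct consequence of the boundedness \eqref{boundedness}, the coercivity \eqref{desiredestim2}, and the Lax--Milgram theorem, and you have simply spelled out these steps (including the identification of $L^{\frac{2d}{d+2}}(U)$ with a subspace of $H^{-1,2}(U)$, which the paper leaves implicit).
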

A disadvantage of the direct approach to existence and uniqueness via the Lax–Milgram theorem is that we only know the well-posedness of \eqref{maineq3}, not \eqref{maineq2}. Moreover, we cannot explicitly compute or characterize the number $N \ge 0$, and hence the zero-order coefficient must be taken as $c + \frac{N^2}{\lambda}$ rather than $c$. Of course, if we restrict attention to the case where ${\rm div}\mathbf{H} \leq 0$ and $c \geq 0$, then there is no need to perturb $c$ by $\frac{N^2}{\lambda}$. However, for a general vector field $\mathbf{H}$ without the assumption ${\rm div}\mathbf{H} \leq 0$, one needs an argument going beyond the Lax–Milgram theorem in order to establish existence and uniqueness of solutions to \eqref{maineq2}. Indeed, by combining a weak maximum principle with the Fredholm alternative, one can still treat zero-order coefficients $c \ge 0$ together with a general drift $\mathbf{H}$. The weak maximum principle for \eqref{maineq2} is stated as follows.

\begin{proposition} \label{weakmaxi}
Under the assumption {\bf (Y1)}, let $u \in H^{1,2}_0(U)$ be such that
\begin{equation*}
\int_U \left( \langle A \nabla u, \nabla \psi \rangle + \langle \mathbf{H}, \nabla u \rangle \psi + c u \psi \right) \, dx \leq 0 \quad \text{ for all } \psi \in H^{1,2}_0(U) \text{ with } \psi \geq 0.
\end{equation*}
Then $u \leq 0$ in $U$.
\end{proposition}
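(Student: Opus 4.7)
The natural starting point is to test the assumed variational inequality with the non-negative function $\psi := u^+ \in H^{1,2}_0(U)$. The chain rule gives $\nabla u^+ = \nabla u \cdot \mathbf{1}_{\{u>0\}}$ a.e., which combined with $u \cdot u^+ = (u^+)^2$ and $\langle A\nabla u, \nabla u^+\rangle = \langle A\nabla u^+, \nabla u^+\rangle$ a.e.\ reduces the hypothesis to $\mathcal{B}(u^+, u^+) \leq 0$. Using ellipticity, $c \geq 0$, and the coercivity estimate \eqref{desiredestim2} with $f = u^+$, this yields
\[
\tfrac{\lambda}{2}\|\nabla u^+\|_{L^2(U)}^2 \leq \mathcal{B}(u^+, u^+) + \tfrac{N^2}{\lambda}\|u^+\|_{L^2(U)}^2 \leq \tfrac{N^2}{\lambda}\|u^+\|_{L^2(U)}^2,
\]
an $L^2$-energy bound that by itself does not force $u^+ \equiv 0$, since $N$ can be large relative to the Poincar\'e constant of $U$.

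To upgrade this to essential boundedness I would pass to Stampacchia-type truncations. For each $k \geq 0$, the function $(u-k)^+ \in H^{1,2}_0(U)$ is an admissible non-negative test, and $c u (u-k)^+ \geq 0$ on $\{u > k\}$ because $k, c \geq 0$. Writing $w_k := (u-k)^+$ and $A_k := \{u > k\}$, H\"older's inequality and the Sobolev embedding $\|\cdot\|_{L^{2d/(d-2)}} \leq \tfrac{2(d-1)}{d-2}\|\nabla\cdot\|_{L^2}$ give
\[
\lambda\|\nabla w_k\|_{L^2(U)}^2 \leq \tfrac{2(d-1)}{d-2}\|\mathbf{H}\|_{L^d(A_k)}\|\nabla w_k\|_{L^2(U)}^2.
\]
Since $|A_k| \to 0$ as $k \to \infty$ (as $u \in L^{2d/(d-2)}(U)$) and $\mathbf{H} \in L^d(U,\mathbb{R}^d)$, absolute continuity of $\int \|\mathbf{H}\|^d dx$ forces the prefactor to be $<1$ for $k$ sufficiently large, hence $w_k \equiv 0$ and $u^+ \in L^\infty(U)$.

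The decisive obstacle is the final step of lowering the essential supremum of $u^+$ all the way to $0$. Once $u^+ \in L^\infty(U)$, I would combine the a priori pointwise bound $w_k \leq \|u^+\|_{L^\infty}$ with the splitting of $\mathbf{H}$ from \eqref{vectorfieh}--\eqref{coercive} and the sharpened interpolation $\|w_k\|_{L^2(A_k)} \leq \tfrac{2(d-1)}{d-2}\|\nabla w_k\|_{L^2(U)}|A_k|^{1/d}$ to extract a Stampacchia-type recursion of the form $(h-k)|A_h|^{(d-2)/(2d)} \leq C|A_k|^{1/2}$ for $0 \leq k < h$; the gap $\tfrac{1}{2} > \tfrac{d-2}{2d}$ for $d \geq 3$ then drives the iteration, possibly supplemented by a weighted test function in the spirit of Trudinger (e.g.\ $u^+ e^{\alpha u^+}$) designed to absorb the drift, to yield $|\{u > 0\}| = 0$. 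Alternatively, since $\mathbf{H} \in L^d$ and $c \in L^{d/2}$ sit precisely in the De Giorgi--Nash--Moser regime, one may first invoke interior H\"older continuity and then conclude by the classical interior-maximum argument together with $u|_{\partial U} = 0$. Throughout, the delicate point is that the integrabilities are at the critical scaling for $H^{1,2}_0(U)$, so every constant in the iteration must be tracked carefully via the splittings in \eqref{vectorfieh}--\eqref{coercive}.
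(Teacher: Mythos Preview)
The paper does not prove Proposition~\ref{weakmaxi}; it simply refers to Trudinger \cite[Theorem~1]{Tma} (and, alternatively, \cite[Theorem~2.1.8 and Lemma~2.1.7]{BKRS15}). So there is no ``paper's own proof'' to compare with beyond those references, and your proposal should be read as an attempt to reconstruct Trudinger's argument.

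Your outline is on the right track up to the boundedness of $u^{+}$: testing with $(u-k)^{+}$ and using that $\|\mathbf{H}\|_{L^{d}(A_k)}\to 0$ as $|A_k|\to 0$ correctly yields $u^{+}\in L^{\infty}(U)$. The genuine gap is the final step. The Stampacchia recursion you write down,
\[
(h-k)\,|A_h|^{(d-2)/(2d)} \le C\,|A_k|^{1/2},
\]
is obtained by using the pointwise bound $w_k\le M:=\|u^{+}\|_{L^{\infty}(U)}$, so the constant $C$ scales linearly with $M$. The Stampacchia lemma then gives $|A_{d_0}|=0$ for some $d_0\sim C\,|A_0|^{1/d}\sim M\,|A_0|^{1/d}$, which only reproves $u\le d_0$ and does \emph{not} force $|A_0|=|\{u>0\}|=0$; you would need $d_0<M$, i.e.\ $|A_0|$ small, which is exactly what you are trying to prove. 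This circularity is not an oversight of bookkeeping but a reflection of the fact that $\mathbf{H}\in L^{d}$ is scale-invariant: no purely linear truncation $(u-k)^{+}$ can close the loop at the critical exponent.

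What actually works is precisely the ingredient you mention only parenthetically: a \emph{nonlinear} test function in the spirit of Trudinger, e.g.\ $\psi=(u-k)^{+}/(M+\varepsilon-u)$ followed by the logarithmic substitution $w=\log\bigl((M+\varepsilon-k)/(M+\varepsilon-u)\bigr)$ on $\{u>k\}$. This converts the drift contribution into a term controllable by $\|\mathbf{H}\|_{L^{d}(A_k)}\|\nabla w\|_{L^{2}}^{2}$ and produces an inequality for $w$ that is \emph{not} homogeneous in $M$, allowing one to conclude $|A_k|$ cannot stay bounded away from zero for $k<M$ and hence $M\le 0$. This is the heart of \cite[Theorem~1]{Tma}, and it is not a mere supplement to the Stampacchia iteration but the decisive mechanism; your sketch leaves it entirely undeveloped. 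The De~Giorgi--Nash--Moser alternative you suggest also does not close the argument directly: interior H\"older continuity does not by itself give a global sign without a form of the weak Harnack inequality or the strong maximum principle, which again requires the same nonlinear test-function machinery.
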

Indeed, the original proof of Proposition \ref{weakmaxi} is presented in \cite[Theorem 1]{Tma} (cf. \cite{T73}), and an alternative approach to the proof of Proposition \ref{weakmaxi} is given in \cite[Theorem 2.1.8 and Lemma 2.1.7]{BKRS15}. As a corollary of Proposition \ref{weakmaxi}, we immediately obtain the following.
\begin{corollary} \label{uniquenessres}
Assume {\bf (Y1)}. If $u$ is a weak solution to \eqref{maineq2} with $f=0$, then $u=0$ in $U$.
\end{corollary}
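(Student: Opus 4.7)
The plan is to derive Corollary \ref{uniquenessres} as an essentially immediate consequence of the weak maximum principle (Proposition \ref{weakmaxi}) by applying it to both $u$ and $-u$. Since the equation in \eqref{maineq2} is linear and $f=0$, if $u$ is a weak solution, then so is $-u$, and the weak maximum principle will pin both from above by $0$, forcing $u \equiv 0$.

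First, I would extend the class of admissible test functions in \eqref{weakformu} from $C_0^\infty(U)$ to all of $H^{1,2}_0(U)$. Under \textbf{(Y1)}, the bilinear form $\mathcal{B}$ defined in \eqref{bilinearf} is bounded on $H^{1,2}_0(U) \times H^{1,2}_0(U)$ by \eqref{boundedness}, and $f=0$ so the right-hand side vanishes. Since $C_0^\infty(U)$ is dense in $H^{1,2}_0(U)$ by definition of the latter space, a standard continuity/density argument yields
\[
\int_U \bigl( \langle A \nabla u, \nabla \psi\rangle + \langle \mathbf{H}, \nabla u\rangle\psi + c u \psi\bigr)\,dx = 0 \quad \text{for all } \psi \in H^{1,2}_0(U).
\]

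Second, for any non-negative $\psi \in H^{1,2}_0(U)$, the displayed identity in particular gives
\[
\int_U \bigl( \langle A \nabla u, \nabla \psi\rangle + \langle \mathbf{H}, \nabla u\rangle\psi + c u \psi\bigr)\,dx = 0 \le 0,
\]
so Proposition \ref{weakmaxi} applies and yields $u \le 0$ a.e.\ in $U$. Replacing $u$ by $-u$ (which again lies in $H^{1,2}_0(U)$ and satisfies the same weak formulation by linearity), the same reasoning gives $-u \le 0$, hence $u \ge 0$ a.e.\ in $U$. Combining the two inequalities yields $u = 0$ in $U$, as claimed.

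I do not expect a serious obstacle here; the argument is a textbook linearity trick once Proposition \ref{weakmaxi} is in hand. The only point worth checking carefully is the extension of the test function class from $C_0^\infty(U)$ to $H^{1,2}_0(U)$, which relies on the boundedness constant $K$ in \eqref{boundedness} being finite under \textbf{(Y1)}, i.e., on the assumed integrability $\mathbf{H}\in L^d(U,\mathbb{R}^d)$ and $c\in L^{d/2}(U)$. Once this extension is in place, the rest is immediate.
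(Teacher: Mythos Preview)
Your proof is correct and matches the paper's approach: the paper simply states that Corollary~\ref{uniquenessres} follows immediately from Proposition~\ref{weakmaxi}, and the linearity trick you spell out (apply the maximum principle to both $u$ and $-u$) is exactly the intended argument. Your explicit remark about extending the test class from $C_0^\infty(U)$ to $H_0^{1,2}(U)$ via the boundedness estimate~\eqref{boundedness} fills in the one detail the paper leaves implicit.
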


Now, under the assumption \textbf{(Y1)}, let $N \geq 0$ be the constant as in \eqref{vectorfieh} and $\gamma:=\frac{N^2}{\lambda}$. Let $\mathcal{B}$ be the bilinear form defined as in \eqref{bilinearf}.
By the Lax–Milgram theorem(\cite[Corollary 5.8]{Br11}) with \eqref{boundedness} and \eqref{desiredestim2}, for each $\psi \in H^{-1,2}(U)$ there exists a unique $u_{\gamma, \psi} \in H^{1,2}_0(U)$ such that
\begin{equation} \label{variationiden}
\mathcal{B}(u_{\gamma, \psi}, \varphi)  + \gamma \int_{U} u_{\gamma, \psi} \varphi\,dx = \langle \psi, \varphi \rangle_{H^{-1,2}(U)} \quad \text{ for all $\varphi \in H^{1,2}_0(U)$}.
\end{equation}
Substituting $u_{\gamma, \psi}$ for $\varphi$ in \eqref{variationiden} and using \eqref{desiredestim2} and the H\"{o}lder inequality,
\begin{align*}
\frac{\lambda}{2} \| \nabla u_{\gamma, \psi} \|^2_{L^2(U)} \leq  \langle \psi, u_{\gamma, \psi} \rangle_{H^{-1,2}(U)} \leq \|\psi \|_{H^{-1,2}(U)} \| \nabla u_{\gamma, \psi} \|_{L^{2}(U)},
\end{align*}
and hence we obtain that
\begin{equation} \label{energyestima}
\| \nabla u_{\gamma, \psi} \|_{L^2(U)} \leq \frac{2}{\lambda} \| \psi\|_{H^{-1,2}(U)}.
\end{equation}
Define the operator $K:H^{-1,2}(U) \rightarrow H^{1,2}_0(U)$ by setting
\begin{equation} \label{defnofk}
K \psi:= u_{\gamma, \psi}, \quad \psi \in H^{-1,2}(U).
\end{equation}
Consequently, from \eqref{energyestima} and \eqref{defnofk}, the boundedness of $K$ is established:
$$
\| K \psi \|_{H^{1,2}_0(U)} \leq \frac{2}{\lambda} \| \psi\|_{H^{-1,2}(U)} \quad \text{ for all } \psi \in H^{-1,2}(U).
$$
We introduce the operator $J: H^{1,2}_0(U) \to H^{-1,2}(U)$, where for each $u \in H^{1,2}_0(U)$, the action of $J(u)$ is given by
\begin{equation} \label{inclusionmap}
\langle J(u), \varphi \rangle_{H^{-1,2}(U)} = \int_{U} u \varphi \,dx  \quad \text{ for all $\varphi \in H^{1,2}_0(U)$}.
\end{equation}
The Rellich–Kondrachov compactness theorem (\cite[Section 5.7, Theorem 1]{E10}) and \eqref{inclusionmap} ensure that $J$ is a compact operator, and thus $K \circ J$ is also compact. Then, one can show  the following assertion. \\ \\
{\sf \underline{Assertion}:} Let $u \in H_0^{1,2}(U)$ and $\psi \in H^{-1,2}(U)$. The following statements {\rm ($\alpha$)--($\beta$)} are equivalent:
\begin{equation} \label{bvpequival}
\left\{
\begin{aligned}
    u- \gamma \big( K \circ J  \big)u &= K \psi \quad \text{ in } H^{1,2}_0(U). &&  \;\;\qquad {\rm (\alpha)} \\[1em]
    \mathcal{B}(u, \varphi)&= \langle \psi, \varphi \rangle_{H^{-1,2}(U)} \qquad \text{ for all $\varphi \in H^{1,2}_0(U)$.} && \;\;\qquad {\rm (\beta)}
\end{aligned}
\right.
\end{equation}
To verify the assertion, note that if ($\alpha$) holds, then
\[
u = K\big(\psi+\gamma J(u)\big).
\]
By the definition of $K$ (see \eqref{defnofk} and \eqref{variationiden}), this equality is equivalent to
\begin{equation}\label{variatcondia}
\mathcal{B}_{\gamma}(u,\varphi)
=\big\langle \psi+\gamma J(u),\varphi\big\rangle_{H^{-1,2}(U)}
\quad \text{for all }\varphi\in H^{1,2}_0(U),
\end{equation}
and, in view of \eqref{inclusionmap}, this is exactly ($\beta$). Conversely, if ($\beta$) holds, then \eqref{variatcondia} follows. By the definition of $K$,
\[
u = K\big(\psi+\gamma J(u)\big),
\]
and hence ($\alpha$) holds. This completes the proof of the assertion.
\\
Now, let $I: H^{1,2}_0(U) \to H^{1,2}_0(U)$ be the identity operator. Setting $\psi = 0$ in statement ($\alpha$), the equivalence established in \eqref{bvpequival} indicates that
\[
\text{$u \in H^{1,2}_0(U)$ satisfies } \big(I - \gamma (K \circ J) \big) u = 0
\]
if and only if
\[
\mathcal{B}(u, \varphi) = 0 \quad \text{for all } \varphi \in H^{1,2}_0(U).
\]
Hence, by Corollary \ref{uniquenessres}, it follows that the kernel of the operator is trivial:
\[
\left\{ u \in H^{1,2}_0(U) : \big(I - \gamma (K \circ J)\big) u = 0 \right\} = \{0\}.
\]
Given that $\gamma (K \circ J): H^{1,2}_0(U) \to H^{1,2}_0(U)$ is a compact operator, the Fredholm alternative (see \cite[Theorem~6.6]{Br11}) guarantees that for every $\psi \in H^{-1,2}(U)$, there exists a solution $u_{\psi} \in H^{1,2}_0(U)$ to
\[
\big(I - \gamma (K \circ J)\big) u_\psi = K \psi.
\]
Therefore, by the equivalence demonstrated in \eqref{bvpequival}, we obtain the following existence and uniqueness result.
\begin{theorem} \label{submaintheor}
Assume {\bf (Y1)} and let $f \in L^{\frac{2d}{d+2}}(U)$. Then, there exists a unique weak solution $u$ to \eqref{maineq2}.
\end{theorem}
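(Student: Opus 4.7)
The plan is to read off the result from the operator-theoretic machinery already assembled in the discussion preceding the theorem. First I would associate to the datum $f$ a functional $\psi_f \in H^{-1,2}(U)$ by setting
\[
\langle \psi_f, \varphi \rangle_{H^{-1,2}(U)} := \int_{U} f \varphi \, dx, \qquad \varphi \in H^{1,2}_0(U).
\]
Since $\tfrac{2d}{d+2}$ is the conjugate exponent of the Sobolev exponent $\tfrac{2d}{d-2}$, the H\"older inequality together with the Sobolev embedding $H^{1,2}_0(U) \hookrightarrow L^{\frac{2d}{d-2}}(U)$ shows that $\psi_f$ is indeed bounded on $H^{1,2}_0(U)$, so this identification is legitimate. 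With this identification, the weak formulation \eqref{weakformu} of \eqref{maineq2} is exactly statement ($\beta$) in the assertion \eqref{bvpequival} with $\psi = \psi_f$.

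Next, I would invoke the equivalence ($\alpha$)$\Leftrightarrow$($\beta$) already proved above to translate the existence and uniqueness of a weak solution to \eqref{maineq2} into the existence and uniqueness of $u \in H^{1,2}_0(U)$ solving
\[
\bigl( I - \gamma (K \circ J) \bigr) u = K \psi_f
\]
in $H^{1,2}_0(U)$. The operator $\gamma(K\circ J)$ is compact, since $K : H^{-1,2}(U) \to H^{1,2}_0(U)$ is bounded by \eqref{energyestima} and $J: H^{1,2}_0(U)\to H^{-1,2}(U)$ is compact by Rellich-Kondrachov, so the Fredholm alternative (\cite[Theorem 6.6]{Br11}) applies to $I-\gamma(K\circ J)$.

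To conclude via Fredholm, I need the kernel of $I - \gamma(K\circ J)$ to be trivial. But any $u$ in this kernel, by the equivalence ($\alpha$)$\Leftrightarrow$($\beta$) with $\psi = 0$, is a weak solution of \eqref{maineq2} with $f=0$, and Corollary \ref{uniquenessres} forces $u=0$. Hence $I - \gamma(K\circ J)$ is a bijection on $H^{1,2}_0(U)$, which produces a unique $u_\psi \in H^{1,2}_0(U)$ with $(I-\gamma(K\circ J))u_\psi = K\psi_f$; translating back through ($\alpha$)$\Leftrightarrow$($\beta$) gives the unique weak solution to \eqref{maineq2}.

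Honestly, little difficulty remains at this stage, since the bulk of the work (bilinear form bound, shifted coercivity, Lax-Milgram for the $\gamma$-shifted problem, the assertion in \eqref{bvpequival}, and the weak maximum principle) has already been carried out. The only step requiring genuine care is the identification of $f \in L^{\frac{2d}{d+2}}(U)$ with an element of $H^{-1,2}(U)$, because it is precisely this Sobolev-duality step that lets the abstract Fredholm scheme, formulated in terms of $\psi \in H^{-1,2}(U)$, be applied to an honest $L^p$-datum and thereby yield a weak solution of \eqref{maineq2} in the sense of \eqref{weakformu}.
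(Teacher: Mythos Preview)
Your proposal is correct and follows essentially the same route as the paper: the discussion preceding the theorem already establishes the Fredholm setup \eqref{bvpequival}, the triviality of the kernel via Corollary~\ref{uniquenessres}, and the resulting solvability for every $\psi \in H^{-1,2}(U)$, so the theorem is stated as an immediate consequence. Your only addition is making explicit the identification of $f \in L^{\frac{2d}{d+2}}(U)$ with $\psi_f \in H^{-1,2}(U)$ via the Sobolev embedding, which the paper leaves implicit.
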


\section{Divergence-free transformation} \label{sec4}
Before studying the divergence-free transformation methods, let us first consider the following simple homogeneous boundary value problem:
\begin{equation} \label{maineqsimp}
\left\{
\begin{alignedat}{2}
-\Delta u + c u &= f  && \quad \; \mbox{\rm in $U$,}\\
u &= 0 &&\quad \; \mbox{\rm on $\partial U$},
\end{alignedat} \right.
\end{equation}
where $c \in L^{\frac{2d}{d+2}}(U)$ with $c \ge 0$ and $f \in L^{\frac{2d}{d+2}}(U)$. A previous approach via bilinear form methods does not directly apply to prove the existence and uniqueness of weak solutions to \eqref{maineqsimp}, since the coefficient $c$ has low integrability, namely $c \in L^{\frac{2d}{d+2}}(U)$. On the other hand, we can easily derive existence and uniqueness for solutions to \eqref{maineqsimp} via a compactness argument. To this end, let $c_n := c \wedge n \in L^{\infty}(U)$. Then, by Theorem \ref{submaintheor}, there exists a unique function $u_n \in H^{1,2}_0(U)$ such that
\begin{equation} \label{weaksolutio}
\int_{U} \langle \nabla u_n, \nabla \varphi \rangle\,dx
+ \int_{U} c_n u_n \varphi\,dx
= \int_{U} f \varphi \,dx,
\quad \text{for all $\varphi \in H_0^{1,2}(U)$}.
\end{equation}
Substituting $\varphi = u_n$ in \eqref{weaksolutio} and using Sobolev's inequality, we obtain
\begin{equation} \label{energestiapp}
\| \nabla u_n \|_{L^2(U)}
\le \frac{2(d-1)}{d-2} \, \|f\|_{L^{\frac{2d}{d+2}}(U)}.
\end{equation}
Then, applying a weak compactness argument to \eqref{weaksolutio}, we obtain $u \in H^{1,2}_0(U)$ such that
\begin{equation} \label{weaksolutexistu}
\int_{U} \langle \nabla u, \nabla \varphi \rangle\,dx
+ \int_{U} c u \varphi\,dx
= \int_{U} f \varphi \,dx,
\quad \text{for all $\varphi \in C_0^{\infty}(U)$}
\end{equation}
and
\begin{equation} \label{energest}
\| \nabla u \|_{L^2(U)}
\le \frac{2(d-1)}{d-2} \, \|f\|_{L^{\frac{2d}{d+2}}(U)}.
\end{equation}
Moreover, let us additionally assume that $f \in L^{\frac{pd}{p+d}}(U)$ for some $p \in (d, \infty)$. Then, by the standard Moser iteration argument (see \cite[Proof of Theorem 4.2]{L24}), the function $u_n \in H^{1,2}_0(U)$ constructed above actually belongs to $H^{1,2}_0(U) \cap L^{\infty}(U)$, and the following estimate holds:
\begin{equation} \label{applinftboun}
\|u_n \|_{L^{\infty}(U)} \leq C \|f \|_{L^{\frac{pd}{d+p}}(U)},
\end{equation}
where $C>0$ is a constant which depends only on $d$, $p$ and $|U|$ (and is independent of $c$ and $n$). By weak and weak-* compactness applied to \eqref{weaksolutio}, we obtain $u \in H^{1,2}_0(U) \cap L^{\infty}(U)$ such that
\begin{equation} \label{linfintbdd}
\|u \|_{L^{\infty}(U)} \leq C \|f \|_{L^{\frac{pd}{d+p}}(U)}.
\end{equation}
To show uniqueness of weak solutions to \eqref{maineqsimp}, the previous methods based on the weak maximum principle are no longer applicable because of the low integrability of $c$,  namely $c \in L^{\frac{2d}{d+2}}(U)$.  Instead, we use a duality argument inspired by \cite[Lemma 2.2.11]{K07}. Let $v \in H^{1,2}_0(U)$ be such that
\begin{equation} \label{weaksdfirsi}
\int_{U} \langle \nabla v, \nabla \varphi \rangle \,dx + \int_{U} c v \varphi\,dx = 0, \quad \text{ for all $\varphi \in C_0^{\infty}(U)$}.
\end{equation}
To verify uniqueness, it is enough to show that $v=0$. To apply the duality argument, we now fix $\phi \in C_0^{\infty}(U)$. Then, by the existence and regularity result above, there exists $w \in H^{1,2}_0(U) \cap L^{\infty}(U)$ such that
\begin{equation} \label{weaksdualexi}
\int_{U} \langle \nabla w, \nabla \varphi \rangle\,dx
+ \int_{U} c w \varphi\,dx
= \int_{U} \phi \varphi\,dx,
\quad \text{for all $\varphi \in C_0^{\infty}(U)$}.
\end{equation}
By the denseness of $C_0^{\infty}(U)$ in $H^{1,2}_0(U)$, we have 
\begin{equation} \label{weaksdense}
\int_{U} \langle \nabla w, \nabla v \rangle\,dx
+ \int_{U} c w v\,dx
= \int_{U} \phi v\,dx.
\end{equation}
Since $w \in H^{1,2}_0(U) \cap L^{\infty}(U)$, by \cite[Proposition A.8]{L25jm}, there exist a constant $M>0$ and a sequence of functions $(\varphi_n)_{n \geq 1} \subset C_0^{\infty}(U)$ such that
\begin{equation} \label{densenes}
\sup_{n \geq 1} \| \varphi_n \|_{L^{\infty}(U)} \leq M, \quad \lim_{n \rightarrow \infty} \varphi_n = w \;\; \text{ in $H^{1,2}_0(U)$}, \;\; \lim_{n \rightarrow \infty} \varphi_n = w \; \text{ a.e. in $U$}.
\end{equation}
Then, applying \eqref{densenes} to \eqref{weaksdfirsi} and using Lebesgue's dominated convergence theorem, we obtain
\begin{equation} \label{weaksolfini}
\int_{U} \langle \nabla v, \nabla w \rangle \,dx + \int_{U} c v w\,dx = 0.
\end{equation}
Comparing \eqref{weaksdense} and \eqref{weaksolfini}, we get
\begin{equation} \label{viszero}
\int_{U} \phi v\,dx = 0.
\end{equation}
Since $\phi$ is an arbitrary function in $C_0^{\infty}(U)$, we finally obtain $v=0$, and hence uniqueness follows.\\
The above argument can be summarized as follows.
\begin{theorem} \label{l2lowerstarze}
Let $U$ be a bounded open subset of $\mathbb{R}^d$ with $d \geq 3$.
Assume $c \in L^{\frac{2d}{d+2}}(U)$ with $c \ge 0$. Then the following hold.
\begin{itemize}
\item[(i)]
If $v \in H^{1,2}_0(U)$ satisfies \eqref{weaksdfirsi}, then $v=0$ in $U$.

\item[(ii)]
If $f \in L^{\frac{2d}{d+2}}(U)$, then there exists a weak solution $u \in H^{1,2}_0(U)$ to \eqref{maineqsimp} such that \eqref{energest} holds.

\item[(iii)]
If $f \in L^{\frac{\hat{p}d}{d+\hat{p}}}(U)$ for some $\hat{p} \in (d, \infty)$, then the (unique) weak solution $u$ to \eqref{maineqsimp} as in \textnormal{(ii)} satisfies $u \in L^{\infty}(U)$ and
\eqref{linfintbdd}.
\end{itemize}
\end{theorem}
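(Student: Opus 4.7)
The plan is to handle the three assertions in the order (ii), (iii), (i), because the uniqueness argument in (i) requires as input the existence and $L^\infty$ boundedness from (ii) and (iii) applied to an auxiliary problem.

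For (ii), I would first truncate the zero-order coefficient by setting $c_n := c \wedge n \in L^\infty(U)$, so that Theorem \ref{submaintheor} (applied with $A = I$, $\mathbf{H} = 0$) yields a unique $u_n \in H^{1,2}_0(U)$ satisfying \eqref{weaksolutio}. Testing this identity against $u_n$ itself and discarding the nonnegative term $\int_U c_n u_n^2\,dx$, together with Hölder's inequality and the Sobolev embedding $H^{1,2}_0(U) \hookrightarrow L^{\frac{2d}{d-2}}(U)$ with sharp constant $\frac{2(d-1)}{d-2}$, gives the uniform estimate \eqref{energestiapp}. Hence $(u_n)_{n \geq 1}$ is bounded in $H^{1,2}_0(U)$, and passing to a subsequence I would extract a weak limit $u \in H^{1,2}_0(U)$ satisfying \eqref{energest}. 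To pass to the limit in \eqref{weaksolutio}, the linear and gradient terms pose no difficulty; the term $\int_U c_n u_n \varphi\,dx$ requires more care, but since $\varphi \in C_0^\infty(U)$ is bounded, strong convergence of $u_n$ in $L^2_{\mathrm{loc}}(U)$ (Rellich–Kondrachov) combined with $c_n \nearrow c$ and dominated convergence (dominated by $c|\varphi| \cdot \sup_n \|u_n\|_{L^{2d/(d-2)}}$ in a Hölder sense) lets me identify the limit as $\int_U c u \varphi\,dx$, establishing \eqref{weaksolutexistu}.

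For (iii), the point is that the Moser iteration referenced in the excerpt (from \cite[Proof of Theorem 4.2]{L24}) produces a bound on $\|u_n\|_{L^\infty(U)}$ that depends only on $d$, $p$, $|U|$ and $\|f\|_{L^{pd/(p+d)}(U)}$, crucially not on $c_n$ itself — this is because the inequality $c_n \geq 0$ is all that is used of the zero-order term in the iteration. Hence \eqref{applinftboun} is uniform in $n$, and a weak-$*$ compactness argument on the same subsequence produces \eqref{linfintbdd} for $u$. Uniqueness of $u$ then follows once (i) is proven, so existence and uniqueness are closed off together.

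For (i), which I expect to be the main obstacle because the standard weak maximum principle is unavailable when $c$ is merely $L^{\frac{2d}{d+2}}$, I would use the duality argument already sketched in the excerpt. Given $v \in H^{1,2}_0(U)$ satisfying \eqref{weaksdfirsi} and an arbitrary $\phi \in C_0^\infty(U)$, I apply (ii)–(iii) to produce $w \in H^{1,2}_0(U) \cap L^\infty(U)$ solving \eqref{weaksdualexi}. The subtle point is the legality of the two test-function substitutions: testing \eqref{weaksdualexi} against $v$ is permitted by density of $C_0^\infty(U)$ in $H^{1,2}_0(U)$ together with the integrability $cw v \in L^1(U)$ (using $c \in L^{\frac{2d}{d+2}}$, $w \in L^\infty$, and $v \in L^{\frac{2d}{d-2}}$ via Sobolev), whereas testing \eqref{weaksdfirsi} against $w$ is not immediate since $w \notin C_0^\infty(U)$; here one invokes \cite[Proposition A.8]{L25jm} to approximate $w$ by a sequence $(\varphi_n) \subset C_0^\infty(U)$ that converges to $w$ in $H^{1,2}_0(U)$ while being uniformly bounded in $L^\infty(U)$, so that dominated convergence handles the zero-order term $\int_U cv\varphi_n\,dx \to \int_U cvw\,dx$. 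Subtracting the two resulting identities leaves $\int_U \phi v\,dx = 0$, and since $\phi \in C_0^\infty(U)$ is arbitrary, $v = 0$. The heart of the matter is therefore the uniform-$L^\infty$ approximation of $H^{1,2}_0 \cap L^\infty$ functions by smooth compactly supported ones, which is exactly what forces the detour through the $L^\infty$ regularity in (iii).
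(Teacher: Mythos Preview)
Your proposal is correct and follows essentially the same route as the paper: truncate $c$ to $c_n = c \wedge n$, apply Theorem~\ref{submaintheor} and test against $u_n$ to get the uniform energy bound, pass to the weak limit for (ii), invoke the Moser iteration from \cite{L24} (with constant independent of $c_n$) for (iii), and then run the duality argument for (i) using the uniformly bounded $C_0^\infty$-approximation of $w \in H^{1,2}_0(U)\cap L^\infty(U)$ from \cite[Proposition~A.8]{L25jm}. You have also correctly isolated the only genuinely delicate step, namely that testing \eqref{weaksdfirsi} against $w$ requires the $L^\infty$-bounded approximants rather than plain $H^{1,2}_0$-density.
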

As far as we know, the above well-posedness result under the minimal condition $c\in L^{\frac{2d}{d+2}}(U)$ does not seem to be explicitly available in the literature. In fact, we would like to generalize Theorem \ref{l2lowerstarze} from the case $c \in L^{\frac{2d}{d+2}}(U)$ to the more general assumption $c \in L^1(U)$. To this end, we again employ an approximation argument. Assume that $f \in L^{\frac{\hat{p}d}{d+\hat{p}}}(U)$ for some $\hat{p} \in (d, \infty)$ and $c \in L^1(U)$ with $c \geq 0$ in $U$. Let $c_n := c \wedge n$. Then, by Theorem \ref{l2lowerstarze}, there exists $u_n \in H^{1,2}_0(U) \cap L^{\infty}(U)$ such that \eqref{weaksolutio},
\eqref{energestiapp} and \eqref{applinftboun} hold. By weak and weak-* compactness applied to \eqref{weaksolutio}, we obtain $u \in H^{1,2}_0(U) \cap L^{\infty}(U)$ such that \eqref{weaksolutexistu}, \eqref{energest} and \eqref{linfintbdd} hold. Next, we prove the uniqueness result. Assume that $v \in H^{1,2}_0(U)$ satisfies $cv \in L^1(U)$ and \eqref{weaksdfirsi}. To show $v=0$ in $U$, let $\phi \in C_0^{\infty}(U)$ be arbitrary. By the existence and regularity result, there exists $w \in H^{1,2}_0(U) \cap L^{\infty}(U)$ such that \eqref{weaksdualexi} holds.
For each $n \geq 1$, define $v_n := (v \wedge n) \vee (-n)$. By the denseness result in \eqref{densenes} and Lebesgue's dominated convergence theorem, we have
\begin{equation} \label{weaksdeappron}
\int_{U} \langle \nabla w, \nabla v_n \rangle\,dx + \int_{U} c w v_n\,dx
= \int_{U} \phi v_n\,dx, \quad \text{for each $n \geq 1$}.
\end{equation}
Observe that
$$
|c w v_n|
= |c|\,|v_n|\,|w|
\leq |c|\,|v|\,|w|
= |cv|\,|w|
\quad \text{for each $n \geq 1$},
$$
and, by \cite[Chapter I, Proposition 4.17]{MR92},
$$
\lim_{n \rightarrow \infty} v_n = v \;\; \text{in $H^{1,2}_0(U)$} 
\quad \text{and} \quad
\lim_{n \rightarrow \infty} v_n = v \;\; \text{a.e. in $U$}.
$$
Therefore, passing to the limit $n \rightarrow \infty$ in \eqref{weaksdeappron} and using Lebesgue's dominated convergence theorem, we obtain
\begin{equation} \label{weaksorigi}
\int_{U} \langle \nabla w, \nabla v \rangle\,dx + \int_{U} c w v\,dx
= \int_{U} \phi v\,dx.
\end{equation}
Meanwhile, applying the denseness result as in \eqref{densenes} to \eqref{weaksdfirsi}, we get \eqref{weaksolfini}.
Comparing \eqref{weaksorigi} and \eqref{weaksolfini}, we obtain \eqref{viszero}, and hence $v=0$, as desired. \\
Summarizing, we obtain the following result.
\begin{theorem} \label{theoelone}
Let $U$ be a bounded open subset of $\mathbb{R}^d$ with $d \geq 3$.
Assume that $c \in L^{1}(U)$ with $c \ge 0$. Then the following hold.
\begin{itemize}
\item[(i)]
If $v \in H^{1,2}_0(U)$ with $cv \in L^1(U)$ satisfies \eqref{weaksdfirsi},
then $v=0$ in $U$.
\item[(ii)]
If $f \in L^{\frac{\hat{p}d}{d+\hat{p}}}(U)$ for some $\hat{p} \in (d, \infty)$, then there exists a weak solution $u \in H^{1,2}_0(U) \cap L^{\infty}(U)$ to \eqref{maineqsimp} such that \eqref{energest} and \eqref{linfintbdd} hold.
\end{itemize}
\end{theorem}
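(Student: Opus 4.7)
The statement extends Theorem \ref{l2lowerstarze} from $c \in L^{\frac{2d}{d+2}}(U)$ to $c \in L^1(U)$. Because the bilinear form is no longer continuous on $H^{1,2}_0(U) \times H^{1,2}_0(U)$, the strategy is the approximation scheme already sketched in the paragraphs preceding the theorem: cut off $c$ at height $n$, invoke Theorem \ref{l2lowerstarze} at each level, and push the uniform $H^{1,2}_0 \cap L^\infty$ bounds through to the limit by combining weak compactness with the $L^1$-integrability of $c$ against bounded functions.

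For part (ii), set $c_n := c \wedge n \in L^\infty(U) \subset L^{\frac{2d}{d+2}}(U)$ and apply Theorem \ref{l2lowerstarze}(ii)--(iii) to obtain $u_n \in H^{1,2}_0(U) \cap L^\infty(U)$ solving \eqref{weaksolutio} and satisfying \eqref{energestiapp}--\eqref{applinftboun} with constants independent of $n$. Passing to a subsequence, I would arrange $u_n \rightharpoonup u$ weakly in $H^{1,2}_0(U)$, weakly-$*$ in $L^\infty(U)$, and almost everywhere in $U$ (the last point via Rellich--Kondrachov). All terms of \eqref{weaksolutio} tested against $\varphi \in C_0^\infty(U)$ pass to the limit routinely except the zero-order integral, which is controlled by the dominating function $\sup_n \|u_n\|_{L^\infty(U)} \, \|\varphi\|_{L^\infty(U)} \, c \in L^1(U)$, so dominated convergence yields \eqref{weaksolutexistu}. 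The bounds \eqref{energest} and \eqref{linfintbdd} then follow from weak and weak-$*$ lower semicontinuity.

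For part (i), fix an arbitrary $\phi \in C_0^\infty(U)$ and use (ii) to produce $w \in H^{1,2}_0(U) \cap L^\infty(U)$ solving \eqref{weaksdualexi}. Following the duality idea of Theorem \ref{l2lowerstarze}(i), I would test the equation for $v$ with (an approximation of) $w$ and the equation for $w$ with (an approximation of) $v$ and compare. The density statement \cite[Proposition A.8]{L25jm} supplies $(\varphi_n) \subset C_0^\infty(U)$ with a uniform $L^\infty$-bound converging to $w$ in $H^{1,2}_0(U)$ and a.e., while the double truncation $v_n := (v \wedge n) \vee (-n)$ satisfies $v_n \to v$ in $H^{1,2}_0(U)$ and a.e.\ by \cite[Chapter I, Proposition 4.17]{MR92}. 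Substituting these test functions and passing to the limit yields the two identities \eqref{weaksolfini} and \eqref{weaksorigi}; subtracting gives \eqref{viszero}, and since $\phi$ was arbitrary I conclude $v = 0$.

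The main obstacle, and the reason the hypothesis $cv \in L^1(U)$ is essential, lies in the double approximation of the uniqueness step. The zero-order integrals $\int_U c v \varphi_n \, dx$ and $\int_U c w v_n \, dx$ must both converge, and the only uniform majorants available are $\sup_n \|\varphi_n\|_{L^\infty(U)} \, |cv| \in L^1(U)$ on the $\varphi_n$ side and $|cv|\,\|w\|_{L^\infty(U)} \in L^1(U)$ on the $v_n$ side; without the finiteness of $cv$, dominated convergence fails on one of the two legs and the duality argument collapses. Everything else is a standard compactness and lower-semicontinuity exercise once the truncated problem has been solved via Theorem \ref{l2lowerstarze}.
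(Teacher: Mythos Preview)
Your proposal is correct and follows essentially the same approach as the paper's own argument: truncate $c$ at height $n$, invoke Theorem~\ref{l2lowerstarze} to obtain $u_n$ with the uniform bounds \eqref{energestiapp}--\eqref{applinftboun}, pass to the limit via weak/weak-$*$ compactness and dominated convergence for (ii), and for (i) run the duality argument with the dual solution $w \in H^{1,2}_0(U)\cap L^\infty(U)$, using \cite[Proposition~A.8]{L25jm} to approximate $w$ and the truncations $v_n=(v\wedge n)\vee(-n)$ to approximate $v$. Your identification of $cv\in L^1(U)$ as the precise hypothesis needed to dominate the zero-order integrals in both legs of the duality is exactly the point the paper emphasizes.
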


As a direct consequence of Theorem \ref{theoelone}, we obtain a nontrivial existence and uniqueness result for strong solutions to \eqref{maineqsimp}. Assume that $U$ is a bounded $C^{1,1}$ domain in $\mathbb{R}^d$ with $d \geq 3$. 
Let $c \in L^s(U)$ for some $s \in (1,\infty)$ with $c \ge 0$ in $U$, and let $f \in L^{\frac{\hat{p}d}{d+\hat{p}}}(U)$ for some $\hat{p} \in (d,\infty)$. Here we call $u$ a strong solution to \eqref{maineqsimp} if $u \in H^{1,2}_0(U) \cap H^{2,1}(U)$, $cu \in L^1(U)$, and
\[
-\Delta u(x) + c u(x) = f(x) \quad \text{for a.e. } x \in U.
\]
Since every strong solution to \eqref{maineqsimp} is also a weak solution to \eqref{maineqsimp}, the uniqueness of strong solutions follows from Theorem~\ref{theoelone}(i). Moreover, by Theorem~\ref{theoelone}(ii) there exists $u \in H^{1,2}_0(U) \cap L^{\infty}(U)$ such that \eqref{weaksolutexistu} holds, that is,
\[
\int_U \langle \nabla u, \nabla \varphi \rangle \,dx = \int_U (f - cu)\varphi \,dx \quad \text{for all } \varphi \in C_0^{\infty}(U).
\]
On the other hand, by \cite[Theorem~2.2]{KK19} there exists $\tilde{u} \in H^{2,s}(U) \cap H^{1,s}_0(U)$ such that
\[
\int_U \langle \nabla \tilde{u}, \nabla \varphi \rangle \,dx = \int_U (f - cu)\varphi \,dx \quad \text{for all } \varphi \in C_0^{\infty}(U).
\]
Finally, using the uniqueness result in $H^{1,s \wedge 2}_0(U)$ from \cite[Theorem~2.1(i)]{KK19}, we obtain $u = \tilde{u} \in H^{1,2}_0(U) \cap H^{2,s}(U)$. Summarizing, this yields strong well-posedness for \eqref{maineqsimp} under the substantially weaker condition $c \in L^s(U)$ with $s>1$, compared to the classical assumption $c \in L^{\frac{d}{2}}(U)$.

\begin{theorem}
Assume that $U$ is a bounded $C^{1,1}$ domain in $\mathbb{R}^d$ with $d \geq 3$. Let $c \in L^s(U)$ for some $s \in (1,\infty)$ with $c \ge 0$ in $U$, and let $f \in L^{\frac{\hat{p}d}{d+\hat{p}}}(U)$ for some $\hat{p} \in (d,\infty)$. Then there exists a unique strong solution $u$ to \eqref{maineqsimp}. In particular,
\[
u \in H^{2,s}(U) \cap H^{1,2}_0(U) \cap L^{\infty}(U).
\]
\end{theorem}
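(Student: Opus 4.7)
The plan is to combine the bounded existence result of Theorem \ref{theoelone} with standard $W^{2,s}$-regularity for the Poisson equation on $C^{1,1}$ domains, closing the argument through a uniqueness result for the Laplacian in an appropriate Sobolev class.

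First I would invoke Theorem \ref{theoelone}(ii); since $U$ is bounded and $s>1$, the hypothesis $c\in L^s(U)$ gives $c\in L^1(U)$, and together with $f\in L^{\hat{p}d/(d+\hat{p})}(U)$ (with $\hat p>d$) this produces a weak solution $u\in H^{1,2}_0(U)\cap L^\infty(U)$ to \eqref{maineqsimp} satisfying \eqref{energest} and \eqref{linfintbdd}. Uniqueness of strong solutions follows at once: any strong solution is, by definition, a weak solution satisfying $cu\in L^1(U)$, so the difference of two strong solutions lies in $H^{1,2}_0(U)$, solves \eqref{weaksdfirsi}, and must therefore vanish by Theorem \ref{theoelone}(i).

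Next I would bootstrap the Sobolev regularity by rewriting the equation as the Poisson problem
\begin{equation*}
-\Delta u = f - cu \quad \text{in } U, \qquad u = 0 \quad \text{on } \partial U.
\end{equation*}
Because $u\in L^\infty(U)$ and $c\in L^s(U)$, one has $cu\in L^s(U)$; combined with $f\in L^{\hat{p}d/(d+\hat{p})}(U)$ and the inclusion of Lebesgue spaces on the bounded domain $U$, the right-hand side lies in $L^s(U)$. Since $U$ is $C^{1,1}$, the $W^{2,s}$-theory for the Dirichlet Laplacian (\cite[Theorem 2.2]{KK19}) produces $\tilde u\in H^{2,s}(U)\cap H^{1,s}_0(U)$ solving the same Poisson problem. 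The uniqueness statement for $-\Delta$ in $H^{1,s\wedge 2}_0(U)$ from \cite[Theorem 2.1(i)]{KK19} then identifies $u=\tilde u$, placing $u$ in $H^{2,s}(U)\cap H^{1,2}_0(U)\cap L^\infty(U)$ and exhibiting it as the desired strong solution.

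The main obstacle is bookkeeping the integrability exponents so that the source $f-cu$ genuinely lands in $L^s(U)$: the crucial ingredient is the $L^\infty$-bound \eqref{linfintbdd}, which turns the zeroth-order term $cu$ into a source with the same integrability as $c$, and thereby reduces the nonlinear-looking problem entirely to the linear $W^{2,s}$-theory for the Laplacian. A secondary subtlety is that the two solutions $u$ and $\tilde u$ constructed above live a priori in different Sobolev classes; passing to the common class $H^{1,s\wedge 2}_0(U)$ and invoking the Laplacian uniqueness result there is precisely what makes the identification rigorous.
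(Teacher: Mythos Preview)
Your proposal is correct and follows essentially the same route as the paper's own argument: invoke Theorem~\ref{theoelone} for existence of a bounded weak solution and for uniqueness, rewrite \eqref{maineqsimp} as the Poisson problem with source $f-cu$, apply \cite[Theorem~2.2]{KK19} to produce $\tilde u\in H^{2,s}(U)\cap H^{1,s}_0(U)$, and then identify $u=\tilde u$ via the uniqueness statement \cite[Theorem~2.1(i)]{KK19} in $H^{1,s\wedge 2}_0(U)$. The additional commentary you provide on bookkeeping the exponent so that $f-cu\in L^s(U)$ and on passing to the common class $H^{1,s\wedge 2}_0(U)$ simply makes explicit what the paper leaves implicit.
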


Furthermore, the results of Theorems \ref{l2lowerstarze} and \ref{theoelone} can be extended to linear elliptic equations with a drift term whose weak divergence is non-positive. The idea of the proof is similar to that used for \eqref{l2lowerstarze} and \eqref{theoelone}, but one has to apply more delicate approximations (see \cite{L24}).

\begin{theorem} \label{divfreeltwore}
Let $U$ be a bounded open subset of $\mathbb{R}^d$ with $d \geq 3$.
Let $\hat{A}=(\hat{a}_{ij})_{1 \leq i,j \leq d}$ be a matrix of functions such that $\hat{A}$ is uniformly strictly elliptic and bounded (see \eqref{elliptici}), and let $\hat{\mathbf{B}} \in L^2(U,\mathbb{R}^d)$ satisfy 
\begin{equation} \label{negativweakdiv}
\int_{U} \langle \hat{\mathbf{B}}, \nabla \varphi \rangle\,dx \geq 0, \quad \text{for all $\varphi \in C_0^{\infty}(U)$ with $\varphi \geq 0$}.
\end{equation}
Assume that $\hat{c} \in L^{\frac{2d}{d+2}}(U)$ with $\hat{c} \ge 0$. 
Then the following statements hold.
\begin{itemize}
\item[(i)]
If $\hat{v} \in H^{1,2}_0(U)$ satisfies
\begin{equation} \label{vfununiqdiv}
\int_{U} \langle \hat{A} \nabla \hat{v}, \nabla \varphi \rangle \,dx 
+ \int_{U} \langle \hat{\mathbf{B}}, \nabla \hat{v} \rangle \varphi\,dx
+ \int_{U} \hat{c} \hat{v} \varphi\,dx = 0, \quad \text{for all $\varphi \in C_0^{\infty}(U)$},
\end{equation}
then $\hat{v}=0$ in $U$.

\item[(ii)]
If $\hat{f} \in L^{\frac{2d}{d+2}}(U)$, then there exists a weak solution $\hat{u} \in H^{1,2}_0(U)$ to 
\begin{equation} \label{maineqhat}
\left\{
\begin{alignedat}{2}
-{\rm div}(\hat{A} \nabla \hat{u}) 
+ \langle \hat{\mathbf{B}}, \nabla \hat{u} \rangle 
+ \hat{c} \hat{u} &= \hat{f}  && \quad \mbox{\rm in $U$,}\\
\hat{u} &= 0 &&\quad \mbox{\rm on $\partial U$}.
\end{alignedat} \right.
\end{equation}
such that 
\begin{equation} \label{energesthat}
\| \nabla \hat{u} \|_{L^2(U)}
\le \hat{C}_1 \|\hat{f}\|_{L^{\frac{2d}{d+2}}(U)},
\end{equation}
where $\hat{C}_1>0$ is a constant depending only on $d$ and $\lambda$.

\item[(iii)]
If $\hat{f} \in L^{\frac{\hat{p}d}{d+\hat{p}}}(U)$ for some $\hat{p} \in (d, \infty)$, then the (unique) weak solution $\hat{u}$ to \eqref{maineqhat} obtained in \textnormal{(ii)} satisfies $\hat{u} \in L^{\infty}(U)$ and
\begin{equation} \label{linftybdhat}
\|\hat{u}\|_{L^{\infty}(U)} \leq \hat{C}_2 \| \hat{f} \|_{L^{\frac{\hat{p}d}{d+\hat{p}}}(U)},
\end{equation}
where $\hat{C}_2>0$ is a constant depending only on $\lambda$, $d$, $\hat{p}$, and $|U|$.
\end{itemize}
\end{theorem}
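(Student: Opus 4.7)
The plan is to adapt the approximation strategy behind Theorems \ref{l2lowerstarze} and \ref{theoelone} to the present setting, simultaneously regularizing $\hat{\mathbf{B}}$ and $\hat{c}$ while preserving the distributional inequality \eqref{negativweakdiv}. Concretely, I would fix approximations $\hat{\mathbf{B}}_n \in L^{d}(U,\mathbb{R}^d) \cap L^{\infty}(U,\mathbb{R}^d)$ of $\hat{\mathbf{B}}$ which satisfy \eqref{negativweakdiv} and converge to $\hat{\mathbf{B}}$ strongly in $L^{2}(U,\mathbb{R}^d)$, together with $\hat{c}_n := \hat{c} \wedge n \in L^{\infty}(U)$. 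Since $|U|<\infty$, the pair $(\hat{\mathbf{B}}_n, \hat{c}_n)$ satisfies assumption {\bf (Y1)}, so Theorem \ref{submaintheor} yields a unique weak solution $\hat{u}_n \in H^{1,2}_0(U)$ to the regularized problem with data $\hat{f}$.

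For part (ii), I would test the regularized equation against $\hat{u}_n$: by \eqref{negativweakdiv},
\[
\int_U \langle \hat{\mathbf{B}}_n, \nabla \hat{u}_n \rangle \hat{u}_n\, dx = \tfrac{1}{2} \int_U \langle \hat{\mathbf{B}}_n, \nabla(\hat{u}_n^2) \rangle\, dx \geq 0,
\]
once $\hat{u}_n^2 \in H^{1,1}_0(U)$ is approximated by non-negative $C_0^\infty(U)$ functions. Combined with ellipticity, the sign $\hat{c}_n \geq 0$, and Sobolev's inequality, this gives the uniform bound $\|\nabla\hat{u}_n\|_{L^2(U)} \leq \hat{C}_1 \|\hat{f}\|_{L^{2d/(d+2)}(U)}$. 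Weak compactness in $H^{1,2}_0(U)$ then produces a limit $\hat{u}$; the drift term passes to the limit via the strong $L^{2}$ convergence of $\hat{\mathbf{B}}_n$ combined with the weak $L^2$ convergence of $\nabla \hat{u}_n$, and the zero-order term by Sobolev embedding together with $\hat{c}_n \to \hat{c}$ in $L^{2d/(d+2)}(U)$. For part (iii), I would run the Moser iteration from \cite[Proof of Theorem 4.2]{L24} on the regularized problem; since each iteration step produces an integral of the form $\int_U \langle \hat{\mathbf{B}}_n, \nabla G(\hat{u}_n) \rangle\, dx$ with $G(\hat{u}_n) \geq 0$, condition \eqref{negativweakdiv} makes such terms non-negative and they can simply be discarded, so the iteration constants depend only on $\lambda$, $d$, $\hat{p}$, and $|U|$. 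This yields a uniform $L^\infty$ bound of the form \eqref{linftybdhat}, preserved under weak-$*$ convergence.

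For part (i), I would follow the duality pattern used in Theorems \ref{l2lowerstarze} and \ref{theoelone}. For an arbitrary $\phi \in C_0^\infty(U)$, I would consider the adjoint problem: find $\hat{w} \in H^{1,2}_0(U) \cap L^\infty(U)$ such that
\[
\int_U \langle \hat{A}^{T} \nabla \hat{w}, \nabla \varphi \rangle\, dx + \int_U \hat{w}\, \langle \hat{\mathbf{B}}, \nabla \varphi \rangle\, dx + \int_U \hat{c}\, \hat{w}\, \varphi\, dx = \int_U \phi\, \varphi\, dx
\]
for all $\varphi \in C_0^\infty(U)$. Although the drift now appears in divergence form rather than non-divergence form, testing with $\varphi = \hat{w}$ still gives $\int_U \langle \hat{\mathbf{B}}, \nabla(\hat{w}^2) \rangle\, dx \geq 0$ by \eqref{negativweakdiv}, so the same regularization and Moser iteration as in parts (ii)--(iii) produce such $\hat{w}$ with $\|\hat{w}\|_{L^\infty(U)} \leq C \|\phi\|_{L^{\hat{p}d/(d+\hat{p})}(U)}$. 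I would then use $\hat{w}$ as a test function in \eqref{vfununiqdiv} via the bounded $C_0^\infty$-approximation of \eqref{densenes}, and conversely use truncations of $\hat{v}$ as test functions in the adjoint equation, mimicking \eqref{weaksdeappron}--\eqref{weaksolfini}; comparing the two resulting identities gives $\int_U \phi\, \hat{v}\, dx = 0$, and since $\phi$ is arbitrary, $\hat{v} = 0$.

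The hard part is the initial regularization step: constructing $\hat{\mathbf{B}}_n \in L^d(U,\mathbb{R}^d)$ that converges to $\hat{\mathbf{B}}$ strongly in $L^2$ while preserving \eqref{negativweakdiv}. Pointwise truncation of $\|\hat{\mathbf{B}}\|$ generically destroys the non-positive weak divergence property, and a direct mollification requires extending $\hat{\mathbf{B}}$ across $\partial U$ without introducing spurious boundary contributions to ${\rm div}\hat{\mathbf{B}}$. This is precisely the ``more delicate approximation'' alluded to in the excerpt and carried out in \cite{L24}; once such a regularization is available, the remainder of the argument tracks the scheme of Theorems \ref{l2lowerstarze} and \ref{theoelone}.
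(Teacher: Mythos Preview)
Your proposal is correct and follows essentially the same route the paper indicates: the paper does not give a full proof here but states that the argument parallels Theorems~\ref{l2lowerstarze} and \ref{theoelone} with ``more delicate approximations'' taken from \cite{L24}, and your scheme---regularize $\hat{\mathbf{B}}$ (preserving \eqref{negativweakdiv}) and $\hat{c}$, invoke Theorem~\ref{submaintheor}, exploit the sign of the drift term in the energy estimate and in Moser iteration, pass to the limit, then prove uniqueness via the bounded adjoint solution---is exactly this. You have also correctly isolated the one genuinely nontrivial step, namely producing $\hat{\mathbf{B}}_n \in L^{d}(U,\mathbb{R}^d)$ with $\hat{\mathbf{B}}_n \to \hat{\mathbf{B}}$ in $L^{2}$ while maintaining \eqref{negativweakdiv}, which is precisely the ``delicate approximation'' deferred to \cite{L24}.
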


Using an approximation argument for Theorem \ref{divfreeltwore}, analogous to that in the proof of Theorem \ref{theoelone},  we obtain the following result.

\begin{theorem} \label{divfreqelone}
Let $U$ be a bounded open subset of $\mathbb{R}^d$ with $d \geq 3$.
Let $\hat{A}=(\hat{a}_{ij})_{1 \leq i,j \leq d}$ be a matrix of functions such that $\hat{A}$ is uniformly strictly elliptic and bounded (see \eqref{elliptici}), and let $\hat{\mathbf{B}} \in L^2(U,\mathbb{R}^d)$ satisfy \eqref{negativweakdiv}.
Assume that $\hat{c} \in L^{1}(U)$ with $\hat{c} \ge 0$. 
Then the following hold.
\begin{itemize}
\item[(i)]
If $\hat{v} \in H^{1,2}_0(U)$ satisfies $\hat{c} \hat{v} \in L^1(U)$ and \eqref{vfununiqdiv},
then $\hat{v}=0$ in $U$.

\item[(ii)]
If $\hat{f} \in L^{\frac{\hat{p}d}{d+\hat{p}}}(U)$ for some $\hat{p} \in (d, \infty)$, then there exists a (unique) weak solution $\hat{u} \in H^{1,2}_0(U)$ to \eqref{maineqhat} such that 
\eqref{energesthat} and \eqref{linftybdhat} hold.
\end{itemize}
\end{theorem}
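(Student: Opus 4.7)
The plan is to mimic the derivation of Theorem \ref{theoelone} from Theorem \ref{l2lowerstarze}: truncate the zero-order coefficient $\hat{c}$ at level $n$ to reduce to the $L^{\frac{2d}{d+2}}$-regime where Theorem \ref{divfreeltwore} applies, and then pass to the limit using the uniform estimates obtained there. For the existence claim (ii), I would set $\hat{c}_n := \hat{c} \wedge n \in L^{\infty}(U)$ and apply Theorem \ref{divfreeltwore}(ii)--(iii) to the triple $(\hat{A}, \hat{\mathbf{B}}, \hat{c}_n)$ to obtain $\hat{u}_n \in H^{1,2}_0(U) \cap L^{\infty}(U)$ satisfying
\begin{equation*}
\int_U \langle \hat{A}\nabla \hat{u}_n, \nabla \varphi\rangle\,dx + \int_U \langle \hat{\mathbf{B}}, \nabla \hat{u}_n\rangle\varphi\,dx + \int_U \hat{c}_n \hat{u}_n \varphi\,dx = \int_U \hat{f}\varphi\,dx
\end{equation*}
for all $\varphi \in C_0^{\infty}(U)$, together with the uniform bounds \eqref{energesthat} and \eqref{linftybdhat}. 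Weak compactness in $H^{1,2}_0(U)$, weak-$\ast$ compactness in $L^{\infty}(U)$, and Rellich--Kondrachov then yield a subsequential limit $\hat{u} \in H^{1,2}_0(U) \cap L^{\infty}(U)$ with $\hat{u}_n \to \hat{u}$ strongly in $L^2(U)$ and a.e.\ in $U$. The first two terms pass to the limit by weak $L^2$-convergence of $\nabla \hat{u}_n$ (using $\hat{\mathbf{B}}\varphi \in L^2(U,\mathbb{R}^d)$ for $\varphi \in C_0^{\infty}(U)$); for the third, the integrand $\hat{c}_n \hat{u}_n \varphi$ is dominated by $\hat{c}\,(\sup_n \|\hat{u}_n\|_{L^{\infty}(U)})\,\|\varphi\|_{L^{\infty}(U)} \in L^1(U)$ and converges a.e.\ to $\hat{c}\hat{u}\varphi$, so Lebesgue dominated convergence applies. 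The bounds \eqref{energesthat} and \eqref{linftybdhat} for $\hat{u}$ follow by lower semicontinuity, and uniqueness of $\hat{u}$ reduces to (i) applied to the difference of two solutions.

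For the uniqueness claim (i), I plan to argue directly by using a truncation of $\hat{v}$ itself as a test function, rather than by the duality approach used for Theorem \ref{theoelone}. Given $\hat{v} \in H^{1,2}_0(U)$ with $\hat{c}\hat{v} \in L^1(U)$ satisfying \eqref{vfununiqdiv}, set $\hat{v}_n := (\hat{v}\wedge n)\vee(-n) \in H^{1,2}_0(U)\cap L^{\infty}(U)$, so that $\nabla \hat{v}_n = 1_{\{|\hat{v}|\leq n\}}\nabla \hat{v}$ and $\hat{v}\hat{v}_n \geq 0$. Using \cite[Proposition A.8]{L25jm} to approximate $\hat{v}_n$ in $H^{1,2}_0(U)$ and a.e.\ in $U$ by a sequence $(\varphi_k)_{k\geq 1} \subset C_0^{\infty}(U)$ with $\sup_k \|\varphi_k\|_{L^{\infty}(U)} \leq M$, I would insert $\varphi_k$ into \eqref{vfununiqdiv} and pass $k \to \infty$: the drift term converges by dominated convergence with dominating function $M|\langle \hat{\mathbf{B}}, \nabla \hat{v}\rangle| \in L^1(U)$ (since $\hat{\mathbf{B}}, \nabla \hat{v} \in L^2(U,\mathbb{R}^d)$), and the zero-order term by dominated convergence with dominating function $M|\hat{c}\hat{v}| \in L^1(U)$. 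Using $\nabla \hat{v} = \nabla \hat{v}_n$ a.e.\ on $\{|\hat{v}|\leq n\}$, the resulting identity becomes
\begin{equation*}
\int_U \langle \hat{A}\nabla \hat{v}_n, \nabla \hat{v}_n\rangle\,dx + \int_U \langle \hat{\mathbf{B}}, \nabla \hat{v}\rangle \hat{v}_n\,dx + \int_U \hat{c}\hat{v}\hat{v}_n\,dx = 0.
\end{equation*}
By ellipticity the first term is at least $\lambda \|\nabla \hat{v}_n\|_{L^2(U)}^2$, and the third is non-negative. The key step is that the middle term is also non-negative: with $F_n(s) := \int_0^s \big((t\wedge n)\vee(-n)\big)\,dt$ one has $F_n \geq 0$, $F_n$ Lipschitz, $F_n(0) = 0$, and $\nabla F_n(\hat{v}) = \hat{v}_n\nabla \hat{v}$, so the middle term equals $\int_U \langle \hat{\mathbf{B}}, \nabla F_n(\hat{v})\rangle\,dx$; since $F_n(\hat{v}) \in H^{1,2}_0(U)$ is non-negative and can be written as an $H^{1,2}_0$-limit of non-negative $C_0^{\infty}(U)$ functions, \eqref{negativweakdiv} extends to give the desired non-negativity. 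Therefore $\lambda \|\nabla \hat{v}_n\|^2_{L^2(U)} \leq 0$ for every $n$, which forces $\hat{v}_n = 0$, and hence $\hat{v} = 0$.

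The hardest technical point will be the careful extension of \eqref{negativweakdiv} from $C_0^{\infty}(U)$ to non-negative elements of $H^{1,2}_0(U)$ (by taking positive parts followed by mollification), given only $\hat{\mathbf{B}} \in L^2(U,\mathbb{R}^d)$. All remaining steps are routine adaptations of the argument used for Theorem \ref{theoelone}: the existence step is a standard weak/weak-$\ast$ limit whose only delicate point is the passage to the limit in the zero-order term, and this is precisely what forces the higher integrability assumption $\hat{f} \in L^{\frac{\hat{p}d}{d+\hat{p}}}(U)$ — without the uniform $L^{\infty}$ bound on $\hat{u}_n$ coming from Theorem \ref{divfreeltwore}(iii), the dominated convergence argument would break down.
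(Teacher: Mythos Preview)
Your proposal is correct. The existence argument in (ii) is exactly the approximation scheme the paper indicates (truncate $\hat c$, invoke Theorem~\ref{divfreeltwore}, extract weak/weak-$*$ limits), so there is nothing to add there.

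For (i) you take a genuinely different route. The paper's stated strategy is the duality argument ``analogous to Theorem~\ref{theoelone}'': fix $\phi\in C_0^\infty(U)$, solve an auxiliary (adjoint) problem for $w\in H^{1,2}_0(U)\cap L^\infty(U)$, test both equations symmetrically, and conclude $\int_U \phi \hat v\,dx=0$. In the presence of the drift $\hat{\mathbf B}$ this requires handling the adjoint operator, whose first-order term is of divergence type, so one must appeal to a companion existence/regularity result for that adjoint problem. Your approach sidesteps this entirely: by testing \eqref{vfununiqdiv} with the truncation $\hat v_n$, rewriting the drift term as $\int_U\langle\hat{\mathbf B},\nabla F_n(\hat v)\rangle\,dx$ with $F_n\ge 0$, and invoking \eqref{negativweakdiv}, you obtain the coercivity inequality $\lambda\|\nabla\hat v_n\|_{L^2(U)}^2\le 0$ directly. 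This is shorter and more self-contained, and it exploits the structural hypothesis ${\rm div}\,\hat{\mathbf B}\le 0$ in a transparent way; the duality approach, by contrast, is more robust in situations where no such sign structure is available for an energy estimate (e.g.\ the setting of Theorem~\ref{maintheojma}, where the drift is general and the divergence-free transformation is needed precisely to manufacture such a $\hat{\mathbf B}$). Your identification of the extension of \eqref{negativweakdiv} to non-negative $H^{1,2}_0(U)$ functions as the one point requiring care is accurate, and the mollification-of-positive-parts argument you sketch is the standard way to do it.
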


We now consider the following basic structural condition, under which we can apply the divergence-free transformation:\\ \\
{\bf (Y2)}:
\textit{
$U$ is a bounded open subset of $\mathbb{R}^d$ with $d \ge 3$, $\mathbf{H} \in L^{p}(U, \mathbb{R}^d)$ for some $p \in (d, \infty)$, and $A = (a_{ij})_{1 \le i,j \le d}$ is a (possibly non-symmetric) matrix of measurable functions on $\mathbb{R}^d$ such that $A$ is uniformly strictly elliptic and bounded (see \eqref{elliptici}).
}
\centerline{}
\centerline{}
The main ingredient of our divergence-free transformation is the following result, which is a part of \cite[Theorem~3.1]{L25jm}. The motivation comes from \cite{LST22} and \cite{LT21}, and closely related results are due to \cite{BRS12} and \cite{BKRS15}. In particular, in stochastic analysis it is closely related to the existence of infinitesimally invariant measures.  However, the divergence-free transformation formulated in Theorem~\ref{divfretrans} does not seem to be contained in these works and appears to be new.

\begin{theorem}{\cite[Theorem 3.1]{L25jm}} \label{existinvmea}
Under the assumption {\bf (Y2)}, there exists a function $\rho \in H^{1,2}(U) \cap C(\overline{U})$ and a point $x_1 \in U$ such that $\rho(x)>0$ for all $x \in \overline{U}$ and $\rho(x_1)=1$, and
\begin{equation} \label{infinva}
\int_{U} \langle A^T \nabla \rho  + \rho \mathbf{H}, \nabla \varphi \rangle \,dx = 0 
\quad \text{for all $\varphi \in C_0^{\infty}(U)$}.
\end{equation}
Moreover, there exists a constant $K_1 \geq 1$, depending only on $d$, $\lambda$, $M$, $U$, $p$, and $\|\mathbf{H}\|_{L^p(U)}$, such that
$$
1 \leq \max_{\overline{U}} \rho  \leq K_1 \min_{\overline{U}} \rho \leq K_1.
$$
\end{theorem}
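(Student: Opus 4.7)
Equation \eqref{infinva} reads $L^*\rho = 0$ weakly, where $L^* u := -\mathrm{div}(A^T\nabla u + u\mathbf{H})$ is the formal adjoint of $Lu := -\mathrm{div}(A\nabla u) + \langle\mathbf{H},\nabla u\rangle$. My plan is to produce a positive weak solution of $L^*\rho = 0$ by solving an inhomogeneous dual Dirichlet problem on a slightly enlarged smooth domain—this embedding avoids any hypothesis on $\partial U$ and yields $\rho\in C(\overline{U})$ through interior regularity alone, after which positivity and two-sided bounds follow from Harnack.

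First, I would choose a bounded $C^{1,1}$ domain $V$ with $U\subset\subset V$, extend $A$ to a bounded uniformly elliptic matrix $A^*$ on $V$ (say $A^*=\lambda I$ on $V\setminus U$), and extend $\mathbf{H}$ by zero to $\mathbf{H}^*\in L^p(V,\mathbb{R}^d)$, so that $\|\mathbf{H}^*\|_{L^p(V)}=\|\mathbf{H}\|_{L^p(U)}$. I would then seek $\tilde\rho = 1+w$ with $w\in H^{1,2}_0(V)$ satisfying
\begin{equation*}
\int_V \langle A^{*T}\nabla w,\nabla\varphi\rangle \, dx + \int_V w\langle \mathbf{H}^*,\nabla\varphi\rangle \, dx = -\int_V\langle \mathbf{H}^*,\nabla\varphi\rangle \, dx
\end{equation*}
for all $\varphi\in H^{1,2}_0(V)$. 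The right-hand side is a bounded linear functional on $H^{1,2}_0(V)$ since $\mathbf{H}^*\in L^p(V)\subset L^2(V)$, while the left-hand side is the transpose of the bilinear form \eqref{bilinearf} with $A\rightsquigarrow A^*$, drift $\mathbf{H}^*$, and $c=0$. Existence and uniqueness of $w$ follow from the Fredholm alternative applied to the adjoint of the primal problem, whose kernel is trivial by Corollary \ref{uniquenessres} applied on $V$. Setting $\tilde\rho := 1+w$ and restricting to $U$ yields \eqref{infinva}.

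Next, to obtain positivity and regularity, I would first test the equation for $\tilde\rho$ with $(\tilde\rho)^-\in H^{1,2}_0(V)$ (legitimate since $\tilde\rho = 1$ on $\partial V$ in the trace sense) to conclude $\tilde\rho \ge 0$ on $V$. Because $\mathbf{H}^*\in L^p(V)$ with $p > d$, De Giorgi–Nash–Moser theory for divergence-form equations with drift terms gives that $\tilde\rho$ is locally Hölder continuous on $V$, hence $\tilde\rho\in C(\overline{U})$; the Harnack inequality on the compact set $\overline{U} \subset V$ yields $\max_{\overline{U}}\tilde\rho \le K_1'\min_{\overline{U}}\tilde\rho$ with $K_1'$ depending only on $d,\lambda,M,U,p,\|\mathbf{H}\|_{L^p(U)}$. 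Since $\tilde\rho\not\equiv 0$ (it equals $1$ on $\partial V$), this forces $\tilde\rho > 0$ on $\overline{U}$. I would then normalize $\rho := \tilde\rho/\tilde\rho(x_1)$ for any $x_1 \in U$; the chain $1 \le \max\rho \le K_1\min\rho \le K_1$ with $K_1 := K_1'$ follows from $\rho(x_1)=1$ and Harnack.

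The main obstacle is the Harnack inequality for $L^*$ with $\mathbf{H}\in L^p$, $p>d$: the drift sits inside the divergence, so the Moser iteration must be reworked by expanding $\int w^{k}\langle\mathbf{H},\nabla w\rangle$ as $\int\langle\mathbf{H},\nabla(w^{k+1}/(k+1))\rangle$ after testing with $w^{k}$, and then absorbed via Sobolev embedding in a way that exploits the strict inequality $p>d$ rather than $p=d$—this is precisely where assumption \textbf{(Y2)} is stronger than \textbf{(Y1)} and controls the Harnack constant in terms of $\|\mathbf{H}\|_{L^p(U)}$.
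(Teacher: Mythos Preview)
The paper does not give its own proof of this statement; it is quoted verbatim from \cite[Theorem~3.1]{L25jm} and used as a black box for the divergence-free transformation. So there is no in-paper argument to compare against, and I can only assess your outline on its own merits.

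Your existence and regularity steps are reasonable: enlarging to a smooth $V\supset\supset U$, solving the adjoint Dirichlet problem via the Fredholm theory of Section~\ref{sec3} (the transpose form $\mathcal{B}(\varphi,\cdot)$ is an isomorphism once $\mathcal{B}(\cdot,\varphi)$ is), and invoking interior De~Giorgi--Nash--Moser for $-\mathrm{div}(A^{*T}\nabla\tilde\rho+\tilde\rho\mathbf{H}^*)=0$ with $\mathbf{H}^*\in L^p$, $p>d$, to get $\tilde\rho\in C(\overline U)$ and a Harnack constant depending on the stated data.

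The gap is the positivity step. Testing with $v:=(\tilde\rho)^-\in H^{1,2}_0(V)$ yields
\[
\int_V \langle A^{*T}\nabla v,\nabla v\rangle\,dx \;+\; \int_V v\,\langle\mathbf{H}^*,\nabla v\rangle\,dx \;=\;0,
\]
hence $\lambda\|\nabla v\|_{L^2}^2 \le \bigl|\int_V v\langle\mathbf{H}^*,\nabla v\rangle\,dx\bigr|$. The drift term here sits \emph{inside} the divergence, so the usual absorption only works if $\mathrm{div}\,\mathbf{H}^*\le 0$ weakly (then $\int v\langle\mathbf{H}^*,\nabla v\rangle=\tfrac12\int\langle\mathbf{H}^*,\nabla(v^2)\rangle\ge 0$) or if $\|\mathbf{H}^*\|_{L^d}$ is small relative to $\lambda$. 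Neither is assumed under {\bf (Y2)}. In the Gilbarg--Trudinger framework the adjoint operator has structural coefficients $b=\mathbf{H}$, $c=0$, $d=0$, and the hypothesis of their weak maximum principle is precisely $\mathrm{div}\,b\le 0$; without it the test-with-the-negative-part argument gives only a reverse Poincar\'e inequality and does not force $v=0$.

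Positivity of $\tilde\rho$ therefore needs a different mechanism. One option is duality: the primal solution operator on $V$ is positivity-preserving by Proposition~\ref{weakmaxi}, hence so is its Hilbert-space adjoint, but turning this into $\tilde\rho\ge 0$ with the inhomogeneous boundary datum $1$ still requires a Hopf-type boundary estimate for the primal problem. The route taken in \cite{BKRS15} (and presumably in \cite{L25jm}, to which the paper defers) is rather to approximate the coefficients, obtain strictly positive solutions at the approximate level where classical strong maximum principles are available, and pass to the limit using the uniform interior Harnack bounds furnished by $p>d$. Either way, the single line ``test with $(\tilde\rho)^-$'' does not close the argument.
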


Now, using $\rho$ constructed in Theorem \ref{existinvmea}, we are ready to derive the divergence-free transformation. Let $\rho \in H^{1,2}(U) \cap C(\overline{U})$ be a function constructed in Theorem \ref{existinvmea} and define 
\begin{equation} \label{mathbfbconst}
\mathbf{B}:=\mathbf{H}+\frac{1}{\rho} A^T \nabla \rho \quad \text{on $U$}.
\end{equation}
Then, $\rho \mathbf{B} \in L^2(U,\mathbb{R}^d)$ and by \eqref{infinva},
$$
\int_{U} \langle \rho \mathbf{B}, \nabla \varphi \rangle \,dx = 0 \quad \text{for all $\varphi \in C_0^{\infty}(U)$}.
$$
Now, let $f \in L^1(U)$, $u \in H^{1,2}_0(U)$, and $c \in L^1(U)$ with $cu \in L^1(U)$. Assume that $u$ is a weak solution to \eqref{maineq2}, i.e., \eqref{weakformu} is satisfied. Then, by an approximation argument as in \eqref{densenes}, we get
\begin{equation}  
\begin{aligned}[b]
&\int_U \langle A \nabla u, \nabla \psi \rangle + \langle \mathbf{H}, \nabla u \rangle \psi + cu \psi \,dx \\
&\;\;= \int_U f \psi \,dx \quad \text{for all $\psi \in H_0^{1,2}(U) \cap L^{\infty}(U)$}. \label{weakformuappr}
\end{aligned}
\end{equation}
Now, let $\varphi \in C_0^{\infty}(U)$ be arbitrarily given. Since $\rho \in H^{1,2}(U) \cap L^{\infty}(U)$, by the product rule (cf. \cite[Chapter I, Corollary 4.15]{MR92}), we have $\varphi \rho \in H^{1,2}_0(U) \cap L^{\infty}(U)$. Substituting $\varphi \rho$ for $\psi$ in \eqref{weakformuappr}, we get
\begin{align*}
&\int_U f \varphi \rho \,dx 
 = \int_U \langle A \nabla u, \nabla (\rho \varphi) \rangle  +  \langle \mathbf{H}, \nabla u \rangle \rho \varphi    +  cu \rho \varphi \,dx \\
 &= \int_{U} \langle \rho A \nabla u, \nabla  \varphi \rangle   + \left \langle \frac{1}{\rho}A^T \nabla \rho + \mathbf{H}, \nabla u \right \rangle \rho \varphi   + \rho cu \varphi  \,dx \\
 &= \int_U \langle \rho A \nabla u, \nabla \varphi \rangle + \langle \rho \mathbf{B}, \nabla u \rangle \varphi +  \rho cu \varphi \,dx.
\end{align*}
Hence, by an approximation argument as in \eqref{densenes}, we obtain 
\begin{equation} 
\begin{aligned}[b] 
 &\int_U \langle \rho A \nabla u, \nabla \varphi \rangle + \langle \rho \mathbf{B}, \nabla u \rangle \varphi +  \rho cu \varphi \,dx \\
 &\;\;= \int_{U} \rho f \varphi\,dx \quad \text{for all $\varphi \in H_0^{1,2}(U) \cap L^{\infty}(U)$}. \label{weakfordivfre}
\end{aligned}
\end{equation}
Conversely, assume that $u$ is a weak solution to
\begin{equation} \label{maineqdivfree}
\left\{
\begin{alignedat}{2}
-{\rm div}(\rho A \nabla u) + \langle \rho \mathbf{B}, \nabla u \rangle + \rho c u &=\rho f  && \quad \; \mbox{\rm in $U$,}\\
u &= 0 &&\quad \; \mbox{\rm on $\partial U$}.
\end{alignedat} \right.
\end{equation}
Then, by an approximation argument as in \eqref{densenes}, $u$ satisfies \eqref{weakfordivfre}. Let $\psi \in C_0^{\infty}(U)$ be arbitrarily given. 
Since $\frac{1}{\rho} \in H^{1,2}(U) \cap L^{\infty}(U)$, by the product rule, we have $\frac{\psi}{\rho} \in H^{1,2}_0(U) \cap L^{\infty}(U)$. Substituting $\frac{\psi}{\rho}$ for $\varphi$ in \eqref{weakfordivfre}, we get
\begin{align*}
 &\int_{U} f \psi  \,dx 
 = \int_U \left \langle \rho A \nabla u, \nabla \left(  \frac{\psi}{\rho} \right) \right\rangle  + \langle \mathbf{B}, \nabla u \rangle \psi   +cu \psi \,dx\\
 & = \int_{U} \langle A \nabla u, \nabla \psi \rangle +  \left \langle -\frac{1}{\rho}A^T \nabla \rho + \mathbf{B}, \nabla u \right \rangle  \psi + cu \psi \,dx  \\[0.2em]
 & = \int_U \langle A \nabla u, \nabla \psi \rangle + \langle \mathbf{H}, \nabla u \rangle \psi  +  cu \psi \,dx.
\end{align*}
Hence, by an approximation argument as in \eqref{densenes}, we obtain \eqref{weakformuappr}. Summarizing the above, we obtain the following divergence-free transformation:

\begin{theorem}  {\bf (Divergence-free transformation)} {\cite[Theorem 3.2]{L25jm}} \label{divfretrans}\\
Let $\rho$ be the function constructed in Theorem \ref{existinvmea} and $\mathbf{B}$ be a vector field defined as in \eqref{mathbfbconst}.
Let $f \in L^1(U)$, $u \in H^{1,2}_0(U)$, and $c \in L^1(U)$ with $cu \in L^1(U)$. Then $u$ satisfies \eqref{weakformuappr} if and only if $u$ satisfies
\eqref{weakfordivfre}. In other words, $u$ is a weak solution to \eqref{maineq2} if and only if $u$ is a weak solution to \eqref{maineqdivfree}.
\end{theorem}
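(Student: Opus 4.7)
My approach is to mirror the computation that was set up in the preceding discussion, relying on three ingredients: (i) the regularity of $\rho$ afforded by Theorem \ref{existinvmea}, namely $\rho \in H^{1,2}(U) \cap C(\overline{U})$ with $K_1^{-1} \le \rho \le K_1$, which in particular yields $\rho,\ 1/\rho \in H^{1,2}(U) \cap L^{\infty}(U)$; (ii) the fact that the class of admissible test functions in \eqref{weakformu} and in \eqref{weakfordivfre} can be enlarged from $C_0^{\infty}(U)$ to $H^{1,2}_0(U) \cap L^{\infty}(U)$ via the bounded approximation result \eqref{densenes} combined with Lebesgue's dominated convergence; and (iii) the product rule for Sobolev functions, which guarantees $\rho\varphi,\ \psi/\rho \in H^{1,2}_0(U) \cap L^{\infty}(U)$ whenever $\varphi,\psi \in C_0^{\infty}(U)$.

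For the forward implication, I would start from \eqref{weakformuappr}, fix an arbitrary $\varphi \in C_0^{\infty}(U)$, and substitute the admissible test function $\psi = \rho\varphi$. Expanding $\nabla(\rho\varphi) = \rho\nabla\varphi + \varphi\nabla\rho$ and collecting the $\varphi\nabla\rho$ contribution into the drift term, the definition $\mathbf{B} = \mathbf{H} + \rho^{-1} A^T \nabla\rho$ from \eqref{mathbfbconst} produces exactly \eqref{weakfordivfre} for test functions in $C_0^{\infty}(U)$. A second application of the bounded approximation \eqref{densenes} then extends \eqref{weakfordivfre} to all $\varphi \in H^{1,2}_0(U) \cap L^{\infty}(U)$, so $u$ is a weak solution of \eqref{maineqdivfree}.

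For the converse, I would reverse the manipulation: starting from \eqref{weakfordivfre} in its extended form, fix $\psi \in C_0^{\infty}(U)$ and substitute $\varphi = \psi/\rho$, which is admissible because $1/\rho \in H^{1,2}(U) \cap L^{\infty}(U)$. Expanding $\nabla(\psi/\rho) = \rho^{-1}\nabla\psi - \psi\,\rho^{-2}\nabla\rho$ and using \eqref{mathbfbconst}, the two occurrences of $\rho^{-1} A^T \nabla\rho$ cancel and leave precisely $\mathbf{H}$, recovering \eqref{weakformuappr} and hence showing $u$ is a weak solution of \eqref{maineq2}.

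The main obstacle will be the bookkeeping of integrability: at every step one must check that all terms under the integral sign lie in $L^1(U)$. The most delicate term is the zero-order contribution $\rho c u \varphi$, for which the hypothesis $cu \in L^1(U)$ combined with $\rho,\varphi \in L^{\infty}(U)$ is exactly what is needed; the gradient terms are controlled using $\nabla u \in L^2(U)$, $\rho \in L^{\infty}(U)$, $\nabla\rho \in L^2(U)$, and boundedness of $\varphi$ (or $\psi$). A secondary subtlety is justifying the two approximation steps that enlarge the test function class: these follow from \eqref{densenes} (i.e.\ \cite[Proposition A.8]{L25jm}) by dominated convergence, for which the uniform $L^{\infty}$ bound on the approximating sequence is essential in order to dominate the zero-order term $cu\psi$ (where only $cu \in L^1(U)$ is known a priori).
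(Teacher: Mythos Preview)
Your proposal is correct and follows essentially the same approach as the paper: the argument preceding the theorem statement performs exactly the two substitutions $\psi=\rho\varphi$ and $\varphi=\psi/\rho$ you describe, using the product rule and the definition \eqref{mathbfbconst}, and invokes the bounded approximation \eqref{densenes} at the same places to pass between $C_0^\infty(U)$ and $H^{1,2}_0(U)\cap L^\infty(U)$ test functions. Your identification of the integrability checks (in particular the role of $cu\in L^1(U)$ together with the uniform $L^\infty$ bound on the approximants) matches the paper's reasoning as well.
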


By using the divergence-free transformation above, we obtain the following direct consequence of Theorems \ref{divfreeltwore} and \ref{divfreqelone}.

\begin{theorem} \label{maintheojma}
Assume {\bf (Y2)} and let $c \in L^1(U)$ with $c \geq 0$ in $U$.
Then the following statements hold:
\begin{itemize}
\item[(i)]
If $v \in H^{1,2}_0(U)$ with $cv \in L^1(U)$ satisfies
$$
\int_U \langle A \nabla v, \nabla \psi \rangle + \langle \mathbf{H}, \nabla v \rangle \psi  +  cv \psi \,dx=0, \quad \text{for all $\psi \in C_0^{\infty}(U)$},
$$
then $v=0$ in $U$.

\item[(ii)]
If $c \in L^{\frac{2d}{d+2}}(U)$ and $f \in L^{\frac{2d}{d+2}}(U)$, then there exists $u \in H^{1,2}_0(U)$ such that \eqref{weakformu} holds.
In particular, there exists a constant $\tilde{C}_1>0$, depending only on $d$, $\lambda$, $M$, and on the constant $K_1 \geq 1$ (see Theorem \ref{existinvmea}), such that
\begin{equation} \label{energestim}
\| \nabla u \|_{L^2(U)} \leq \tilde{C}_1 \|f\|_{L^{\frac{2d}{d+2}}(U)}.
\end{equation}
By Sobolev's inequality, $u \in L^{\frac{2d}{d-2}}(U)$ and satisfies
\begin{equation} \label{intebrbilit}
\|u\|_{L^{\frac{2d}{d-2}}(U)} \leq C_1 \|f\|_{L^{\frac{2d}{d+2}}(U)},
\end{equation}
where $C_1:=\frac{2(d-1)}{d-2} \tilde{C}_1$.

\item[(iii)]
If $f \in L^{\frac{\hat{p}d}{d+\hat{p}}}(U)$ for some $\hat{p} \in (d, \infty)$, then there exists $u \in H^{1,2}_0(U) \cap L^{\infty}(U)$ such that \eqref{weakformu}, \eqref{energestim}, and \eqref{intebrbilit} hold. Moreover, there exists a constant $C_2>0$, depending only on $d$, $\lambda$, $M$, $\hat{p}$, $|U|$, and on the constant $K_1 \geq 1$ (see Theorem \ref{existinvmea}), such that
$$
\| u \|_{L^\infty(U)} \leq C_2 \|f\|_{L^{\frac{\hat{p}d}{d+\hat{p}}}(U)}.
$$
\end{itemize}
\end{theorem}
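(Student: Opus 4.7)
The plan is to reduce Theorem \ref{maintheojma} entirely to Theorems \ref{divfreeltwore} and \ref{divfreqelone} via the divergence-free transformation of Theorem \ref{divfretrans}. Specifically, I would apply those two theorems with the choices
\[
\hat{A} := \rho A, \qquad \hat{\mathbf{B}} := \rho \mathbf{B}, \qquad \hat{c} := \rho c, \qquad \hat{f} := \rho f,
\]
where $\rho$ and $\mathbf{B}$ are as in Theorem \ref{existinvmea} and \eqref{mathbfbconst}. Assumption \textbf{(Y2)} gives exactly the integrability of $\mathbf{H}$ needed for $\rho$ to exist.

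Before quoting those two theorems, I would verify their hypotheses for the hatted objects. Because $\rho \in C(\overline{U})$ with $1/K_1 \leq \rho \leq K_1$, the matrix $\rho A$ is uniformly strictly elliptic with ellipticity constant $\lambda/K_1$ and upper bound $MK_1$, so \eqref{elliptici} holds for $\hat{A}$. Next, $\rho \mathbf{B} \in L^2(U,\mathbb{R}^d)$ by Theorem \ref{existinvmea}, and by \eqref{infinva} together with \eqref{mathbfbconst},
\[
\int_U \langle \rho \mathbf{B}, \nabla \varphi \rangle \,dx = 0 \quad \text{for all } \varphi \in C_0^{\infty}(U),
\]
which in particular yields \eqref{negativweakdiv} for $\hat{\mathbf{B}}$. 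Boundedness of $\rho$ also shows $\rho c \in L^1(U)$ (and in $L^{\frac{2d}{d+2}}(U)$ when $c$ is, with $\|\rho c\|_{L^{\frac{2d}{d+2}}} \leq K_1 \|c\|_{L^{\frac{2d}{d+2}}}$), that $\rho c \geq 0$, and that $\rho f$ lies in the same Lebesgue space as $f$ with $\|\rho f\|_{L^r(U)} \leq K_1 \|f\|_{L^r(U)}$ for every $r \in [1,\infty]$.

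Then each of the three statements drops out. For (i), if $v \in H^{1,2}_0(U)$ with $cv \in L^1(U)$ satisfies the homogeneous version of \eqref{weakformu}, then $\rho c v \in L^1(U)$ and, by Theorem \ref{divfretrans}, $v$ satisfies the homogeneous version of \eqref{weakfordivfre}; Theorem \ref{divfreqelone}(i) applied to $\hat{v}=v$ gives $v=0$. For (ii), the additional assumption $c \in L^{\frac{2d}{d+2}}(U)$ together with $\rho f \in L^{\frac{2d}{d+2}}(U)$ brings us into the scope of Theorem \ref{divfreeltwore}(ii), which produces $u \in H^{1,2}_0(U)$ solving \eqref{maineqdivfree} with
\[
\|\nabla u\|_{L^2(U)} \leq \hat{C}_1 \|\rho f\|_{L^{\frac{2d}{d+2}}(U)} \leq \hat{C}_1 K_1 \|f\|_{L^{\frac{2d}{d+2}}(U)};
\]
Theorem \ref{divfretrans} converts this into a weak solution of \eqref{maineq2}, and Sobolev's embedding yields \eqref{intebrbilit} with $\tilde{C}_1 := \hat{C}_1 K_1$ and $C_1 := \frac{2(d-1)}{d-2}\tilde{C}_1$. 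For (iii), Theorem \ref{divfreqelone}(ii) with $\hat{f}=\rho f \in L^{\frac{\hat{p}d}{d+\hat{p}}}(U)$ gives $u \in H^{1,2}_0(U) \cap L^{\infty}(U)$ solving \eqref{maineqdivfree}, with $\|u\|_{L^\infty(U)} \leq \hat{C}_2 K_1 \|f\|_{L^{\frac{\hat{p}d}{d+\hat{p}}}(U)}$; once again Theorem \ref{divfretrans} transfers the solution back to \eqref{maineq2}, and uniqueness in all cases follows from (i).

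There is no real analytic obstacle, since every delicate argument (duality uniqueness under low integrability, Moser-type $L^\infty$ estimates, compactness for the divergence-free drift) has been packaged into Theorems \ref{existinvmea}, \ref{divfreeltwore}, \ref{divfreqelone}, and \ref{divfretrans}. The only thing that requires care is bookkeeping the constants so that $\tilde{C}_1$ and $C_2$ ultimately depend only on the parameters listed in the statement; this works out precisely because all integrability losses introduced by multiplication or division by $\rho$ are controlled by $K_1$, which in turn depends only on $d,\lambda,M,U,p,\|\mathbf{H}\|_{L^p(U)}$.
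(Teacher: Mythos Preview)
Your proposal is correct and follows exactly the approach the paper indicates: the paper states Theorem~\ref{maintheojma} as a ``direct consequence of Theorems~\ref{divfreeltwore} and~\ref{divfreqelone}'' obtained via the divergence-free transformation, and you have filled in precisely those details, verifying the hypotheses for $\hat{A}=\rho A$, $\hat{\mathbf{B}}=\rho\mathbf{B}$, $\hat{c}=\rho c$, $\hat{f}=\rho f$ and tracking the constants through $K_1$.
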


\begin{remark}
Under the assumption $c \in L^1(U)$ in Theorem~\ref{maintheojma}, it remains open whether the existence and uniqueness of solutions to \eqref{maineq2} still hold when the condition $\mathbf{H} \in L^p(U,\mathbb{R}^d)$ for some $p \in (d,\infty)$ is relaxed to the borderline case $\mathbf{H} \in L^d(U,\mathbb{R}^d)$. In any case, it seems difficult to resolve this problem by using the divergence-free transformation method developed in \cite{L25jm}, as the Harnack inequality for $\rho$ may not be available when $\mathbf{H}\in L^{d}(U, \mathbb{R}^{d})$.
\end{remark}

\section{Interpolations} \label{sec5}
In Theorem \ref{maintheojma}, we considered two conditions on the zero-order coefficient $c$, namely $c \in L^{\frac{2d}{d+2}}(U)$ and $c \in L^1(U)$, where the corresponding assumptions on the data are $f \in L^{\frac{2d}{d+2}}(U)$ and $f \in L^{\frac{\hat{p}d}{d+\hat{p}}}(U)$ with $\hat{p} \in (d, \infty)$, respectively. In the next theorem, we interpolate between these two cases.
\begin{theorem}
Under the assumption {\bf (Y2)}, let $\hat{p} \in (d, \infty)$, $r \in [2, d]$ and define 
$$
k:=\frac{r(\hat{p}-2)}{2(\hat{p}-r)} \in [1, \infty).
$$
Let $f \in L^{\frac{rd}{d+r}}(U)$ and $c \in L^{\frac{2dk}{2dk-d+2}}(U)$ with $c \geq 0$. Then there exists $u \in H^{1,2}_0(U) \cap L^{\frac{2dk}{d-2}}(U)$ such that $u$ is the unique weak solution to \eqref{maineq2}. Moreover,
\begin{equation} \label{interpolineq}
\|u\|_{L^{\frac{2dk}{d-2}}(U)} \leq C_1^{\frac{1}{k}} C_2^{1-\frac{1}{k}} \|f\|_{L^{\frac{rd}{d+r}}(U)},
\end{equation}
where $C_1$ and $C_2>0$ are constants as in Theorem \ref{maintheojma}.
\end{theorem}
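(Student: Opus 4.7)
The plan is to apply the Riesz–Thorin interpolation theorem to the solution operator $T: f \mapsto u$ for \eqref{maineq2}, using as endpoints the two mapping properties furnished by Theorem \ref{maintheojma}. Since part (ii) of that theorem requires $c \in L^{\frac{2d}{d+2}}(U)$, which is strictly stronger than the present hypothesis $c \in L^{\frac{2dk}{2dk-d+2}}(U)$, I first truncate $c_n := c \wedge n \in L^{\infty}(U)$, apply Riesz–Thorin at the level of $c_n$, and then pass to the limit $n \to \infty$. Uniqueness is immediate from Theorem \ref{maintheojma}(i), because $c \in L^1(U)$ and any $u \in H^{1,2}_0(U) \cap L^{\frac{2dk}{d-2}}(U)$ automatically satisfies $cu \in L^1(U)$ by H\"older's inequality with the conjugate pair $\bigl(\tfrac{2dk}{2dk-d+2}, \tfrac{2dk}{d-2}\bigr)$.

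Let $T_n: f \mapsto u_n$ denote the solution operator of \eqref{maineq2} with $c$ replaced by $c_n$. Since $c_n$ lies in both $L^{\frac{2d}{d+2}}(U)$ and $L^1(U)$, parts (ii) and (iii) of Theorem \ref{maintheojma} give
$$\|T_n f\|_{L^{\frac{2d}{d-2}}(U)} \leq C_1\, \|f\|_{L^{\frac{2d}{d+2}}(U)}, \qquad \|T_n f\|_{L^{\infty}(U)} \leq C_2\, \|f\|_{L^{\frac{\hat p d}{d+\hat p}}(U)},$$
where $C_1, C_2$ depend only on the parameters listed in Theorem \ref{maintheojma} and are in particular independent of $n$. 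Linearity of $T_n$ follows from the uniqueness in Theorem \ref{maintheojma}(i). Applying Riesz–Thorin with parameter $\theta := 1 - 1/k \in [0,1)$, and checking by elementary arithmetic that the definition of $k$ precisely yields the interpolated exponents $p_\theta = \frac{rd}{d+r}$ and $q_\theta = \frac{2dk}{d-2}$, I obtain
$$\|u_n\|_{L^{\frac{2dk}{d-2}}(U)} \leq C_1^{1/k} C_2^{1-1/k}\, \|f\|_{L^{\frac{rd}{d+r}}(U)} \qquad \text{uniformly in } n.$$

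To pass to the limit, I combine this bound with the energy estimate \eqref{energestim}, which holds uniformly in $n$ because $f \in L^{\frac{rd}{d+r}}(U) \hookrightarrow L^{\frac{2d}{d+2}}(U)$ (using $r \geq 2$ and $|U| < \infty$). Extracting a subsequence yields $u_n \rightharpoonup u$ in $H^{1,2}_0(U) \cap L^{\frac{2dk}{d-2}}(U)$, $u_n \to u$ in $L^2(U)$ by Rellich–Kondrachov, and $u_n \to u$ almost everywhere; weak lower semicontinuity then transfers the Riesz–Thorin bound to $u$ and gives \eqref{interpolineq}. The only delicate step — and the main technical obstacle of the argument — is verifying that $\int_U c_n u_n \psi\,dx \to \int_U c u \psi\,dx$ for every test function $\psi \in C_0^{\infty}(U)$. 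I handle this via Vitali's convergence theorem: a.e.\ convergence of $c_n u_n \psi$ to $c u \psi$ is clear, and H\"older's inequality with the conjugate pair above yields
$$\int_E |c_n u_n \psi|\,dx \leq \|\psi\|_{L^{\infty}(U)}\, \|c\|_{L^{\frac{2dk}{2dk-d+2}}(E)} \sup_n \|u_n\|_{L^{\frac{2dk}{d-2}}(U)},$$
whose right-hand side tends to zero as $|E| \to 0$ by absolute continuity of the Lebesgue integral. Hence $\{c_n u_n \psi\}$ is uniformly integrable, Vitali's theorem applies, and $u$ satisfies the weak formulation of \eqref{maineq2}, completing the proof.
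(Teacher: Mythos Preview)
Your argument is correct and follows essentially the same strategy as the paper's own proof: truncate the zero-order coefficient, apply Riesz--Thorin to the resulting linear solution operator between the endpoints $\bigl(L^{\frac{2d}{d+2}}\!\to L^{\frac{2d}{d-2}}\bigr)$ and $\bigl(L^{\frac{\hat p d}{d+\hat p}}\!\to L^{\infty}\bigr)$ with $\theta=1-\tfrac1k$, and then pass to the limit via weak compactness in $H^{1,2}_0(U)\cap L^{\frac{2dk}{d-2}}(U)$. The only cosmetic differences are that the paper also truncates $f$ (which is harmless but unnecessary, as you note), and that the paper handles the limit in the zero-order term by the weak--strong pairing in the H\"older-conjugate couple $\bigl(\tfrac{2dk}{d-2},\tfrac{2dk}{2dk-d+2}\bigr)$ rather than via Vitali; both routes are valid.
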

\begin{proof}
For each $n \geq 1$, let $f_n:=(f \wedge n) \vee (-n)$ and $c_n:=(c \wedge n) \vee (-n)$. Let $u_n \in H^{1,2}_0(U) \cap L^{\infty}(U)$ be such that
\begin{equation} \label{approxiequa}
\int_U \langle A \nabla u_n, \nabla \psi \rangle + \langle \mathbf{H}, \nabla u_n \rangle \psi  +  c_n u_n \psi \,dx
=\int_{U} f_n \psi\,dx, \quad \text{for all $\psi \in C_0^{\infty}(U)$},
\end{equation}
as in Theorem \ref{maintheojma}(iii). It then follows from Theorem \ref{maintheojma}(iii) that
\begin{align}
&	\| \nabla u_n \|_{L^2(U)} \leq \tilde{C}_1 \|f_n\|_{L^{\frac{2d}{d+2}}(U)} \leq \tilde{C}_1 \|f\|_{L^{\frac{2d}{d+2}}(U)}, \label{appenerg} \\
&\qquad \qquad \|u_n\|_{L^{\frac{2d}{d-2}}(U)} \leq C_1 \|f_n\|_{L^{\frac{2d}{d+2}}(U)}, \label{appintegra} \\
& \qquad \qquad \|u_n \|_{L^{\infty}(U)}	 \leq C_2 \|f_n\|_{L^{\frac{\hat{p}d}{d+\hat{p}}}(U)}. \label{appbddness}
\end{align}
For each fixed $n$, let $T_n$ denote the linear solution operator
$g \mapsto u_n$ associated with \eqref{approxiequa} (with coefficients $c_n$ fixed).
Then \eqref{appintegra} and \eqref{appbddness} can be written as
\[
\|T_n g\|_{L^{q_0}(U)} \le C_1 \|g\|_{L^{p_0}(U)},
\qquad
\|T_n g\|_{L^{\infty}(U)} \le C_2 \|g\|_{L^{p_1}(U)},
\]
where $q_0=\frac{2d}{d-2}$, $p_0=\frac{2d}{d+2}$, and $p_1=\frac{\hat p d}{d+\hat p}$.
Choose $\theta\in(0,1)$ such that $1-\theta=\frac1k$ (i.e., $\theta=1-\frac1k$).
By the Riesz--Thorin interpolation theorem (\cite[15, Chapter 2, Theorem 2.1]{SS11}), there exist exponents $p_\theta$ and $q_\theta$ such that
\[
\|T_n g\|_{L^{q_\theta}(U)} \le C_1^{1-\theta} C_2^{\theta} \|g\|_{L^{p_\theta}(U)},
\quad
\frac1{q_\theta}=\frac{1-\theta}{q_0},\quad
\frac1{p_\theta}=\frac{1-\theta}{p_0}+\frac{\theta}{p_1}.
\]
Since $1-\theta=\frac1k$, we have $q_\theta=\frac{q_0}{1-\theta}=\frac{2dk}{d-2}$ and
\[
\frac1{p_\theta}
=\frac{1}{k}\frac{d+2}{2d}+\Bigl(1-\frac1k\Bigr)\frac{d+\hat p}{\hat p d}
=\frac{d+r}{rd},
\]
which is equivalent to $p_\theta=\frac{rd}{d+r}$ and $k=\frac{r(\hat p-2)}{2(\hat p-r)}$.
Hence we obtain
\begin{equation}  \label{integratwodk}
\|u_n\|_{L^{\frac{2dk}{d-2}}(U)} \leq C_1^{\frac{1}{k}} C_2^{1-\frac{1}{k}} \|f_n\|_{L^{\frac{rd}{d+r}}(U)} \leq C_1^{\frac{1}{k}} C_2^{1-\frac{1}{k}} \|f\|_{L^{\frac{rd}{d+r}}(U)}. 
\end{equation}
By the weak compactness implied by \eqref{appenerg} and \eqref{integratwodk}, there exist $u \in H^{1,2}_0(U) \cap L^{\frac{2dk}{d-2}}(U)$ and a subsequence of $(u_n)_{n \geq 1}$, denoted again by $(u_n)_{n \geq 1}$, such that
$$
\lim_{n \rightarrow \infty} u_n = u \;\; \text{weakly in $H^{1,2}_0(U)$} \quad \text{and} \quad  \lim_{n \rightarrow \infty} u_n = u \;\; \text{weakly in $L^{\frac{2dk}{d-2}}(U)$}.
$$
Observe that the exponents $\frac{2dk}{d-2}$ and $\frac{2dk}{2dk-d+2}$ are H\"{o}lder conjugates. Thus, passing to the limit as $n \rightarrow \infty$ in \eqref{approxiequa} and \eqref{integratwodk}, we obtain
\eqref{weakformu} and \eqref{interpolineq}, respectively.
\end{proof}

\centerline{}


\centerline{}

\centerline{}
\centerline{}
\noindent
Haesung Lee\\
Department of Mathematics and Big Data Science,  \\
Kumoh National Institute of Technology, \\
Gumi, Gyeongsangbuk-do 39177, Republic of Korea \\
E-mail: fthslt@kumoh.ac.kr, \; fthslt14@gmail.com
\end{document}